\newcommand{\A}{\mathcal{A}}
\renewcommand{\phi}{\varphi}
\newcommand{\K}{\mathcal{K}}
\newcommand{\B}{\mathcal{B}}
\DeclarePairedDelimiter{\norma}{\lVert}{\rVert}
\newcommand{\Stot}{\mathcal{S}}
\newcommand{\Sint}{\mathcal{S}^I}
\newcommand{\SB}{\mathcal{S}^{\Gamma}}
\theoremstyle{definition}
\newtheorem{Definition}{Definition}[section]
\newtheorem{Theorem}[Definition]{Theorem}
\newtheorem{Lemma}[Definition]{Lemma}
\newtheorem{remark}[Definition]{Remark}
\title{CVEM-BEM coupling with decoupled orders for 2D exterior Poisson problems}
\author{{\small Luca Desiderio} \\
{\small	Dipartimento di Scienze Matematiche, Fisiche e Informatiche}\\
{\small	Universit{\`a} di Parma}\\
{\small 	Parma, 43124, Italy} \\
{\small	\texttt{luca.desiderio@unipr.it}} \\
	\And
{\small	Silvia Falletta} \\
{\small	Dipartimento di Scienze Matematiche ``G.L. Lagrange''} \\ 
{\small	Politecnico di Torino} \\
{\small	Torino, 10129, Italy} \\
{\small	\texttt{silvia.falletta@polito.it}} \\
	\And
{\small	Matteo Ferrari} \\
{\small	Dipartimento di Scienze Matematiche ``G.L. Lagrange'' }\\ 
{\small	Politecnico di Torino} \\
{\small	Torino, 10129, Italy} \\
{\small	\texttt{matteo.ferrari@polito.it}} \\
	\And
{\small	Letizia Scuderi} \\
{\small	Dipartimento di Scienze Matematiche ``G.L. Lagrange''} \\ 
{\small	Politecnico di Torino} \\
{\small	Torino, 10129, Italy} \\
{\small	\texttt{letizia.scuderi@polito.it}} \\
}
\begin{document}
\maketitle

\begin{abstract}
For the solution of 2D exterior Dirichlet Poisson problems we propose the coupling of a Curved Virtual Element Method (CVEM) with a Boundary Element Method (BEM), by using decoupled approximation orders. We provide optimal convergence error estimates, in the energy and in the weaker $\textit{L}^\text{2}$-norm, in which the CVEM and BEM contributions to the error are separated. This allows taking advantage of the high order flexibility of the CVEM to retrieve an accurate discrete solution by using a low order BEM.
 The numerical results confirm the a priori estimates and show the effectiveness of the proposed approach.
\end{abstract}

\keywords{Exterior Poisson problems, curved virtual element method, boundary element method, coupling, error estimates.}

\section{Introduction}\label{sec_1_introduction}
In this paper we deal with the following 2D problem

\begin{equation}\label{dirichlet_problem}
\left\{
\begin{aligned}
	&-\Delta u_{e}(\mathbf{x}) = f(\mathbf{x}) & \mathbf{x}\in \Omega_{e},\\
	&u_{e}(\mathbf{x})=0		&	\mathbf{x}\in \Gamma_0,
	\end{aligned}
		\right.
\end{equation}
where 
$\Omega_{e}:=\mathbf{R}^{2}\setminus\overline{\Omega}_{\tiny{0}}$ is an unbounded domain, exterior to an open bounded one $\Omega_{\tiny{0}}$, with Lipschitz boundary $\Gamma_{\tiny{0}}$. It is known (see \cite{JohnsonNedelec1980} and the references therein) that Problem \eqref{dirichlet_problem} admits a unique solution in the space 
\[
W^1(\Omega_e) := \left\{ v :  \omega v \in L^2(\Omega_e), \nabla v \in [L^2(\Omega_e)]^2\right\}
\]
with $\omega(\mathbf{x}) := \left(\sqrt{1+\|\mathbf{x}\|^2}\left(1+\log\left(\sqrt{1+\|\mathbf{x}\|^2}\right)\right)\right)^{-1}$, satisfying the asymptotic conditions
\begin{equation}\label{dirichlet_problem_3}
u_e(\mathbf{x}) = \alpha + O\left(\frac{1}{\|\mathbf{x}\|}\right) \quad \text{and} \quad \nabla u_e(\mathbf{x})  = O\left(\frac{1}{\|\mathbf{x}\|^2}\right) \quad \text{for} \quad \|\mathbf{x}\| \rightarrow \infty.
\end{equation}
The constant $\alpha$ represents the asymptotic behaviour of $u_e$ at infinity and, here, its value is not fixed in advance.

The above problem is of interest
in many engineering and physical applications, for example when studying electric and thermal plane fields on infinite domains produced by point sources, or when solving problems of fluid flows around obstacles.
Many and various numerical methods have been proposed and analysed for its solution, among which we mention the traditional BEM. This latter is the most natural way to deal with unbounded domains (for a reference, see \cite{SauterSchwab2011} and the bibliography therein contained). Another common approach is the coupling of a classical variational or finite difference method with a transparent (absorbing or non-reflecting) condition defined on an artificial boundary $\Gamma$, properly chosen to delimit a finite computational domain. Among the most commonly used Non-Reflecting Boundary Conditions (NRBCs), those of integral type are exact (i.e. not approximated) and allow treating artificial boundaries of arbitrary, even non-convex, shapes.

The aim of this paper is to propose such a coupling by means of the interior CVEM and the one-equation BEM. Standard VEMs have been applied to a wide variety of interior problems (see the pioneering \cite{BeiraoBrezziCangianiManziniMariniRusso2013} for the Poisson problem and \cite{AntoniettiManziniVerani2020, MascottoPerugiaPichler2019, ArtioliMarfiaSacco2020, BeiraoCanutoNochettoVacca2021} for more recent applications), but only few papers deal with exterior problems (see \cite{GaticaMeddahi2019, GaticaMeddahi2020,  DesiderioFallettaScuderi2021, DesiderioFallettaFerrariScuderi2021} for elliptic equations).
Among the CVEM approaches till now investigated, we mention those proposed in \cite{BeiraoRussoVacca2019} and \cite{BeiraoBrezziMariniRusso2020}. Although the latter deals with local polynomial preserving VEM spaces, we choose the former since it is well-suited for problems characterized by computational domains with prescribed curved boundaries, like ours.

The choice of using VEM, or the more general CVEM, is mainly motivated by the following reasons: it allows us to consider meshes whose elements can be of general shape, and to use local discrete spaces of arbitrarily high order by maintaining the simplicity of implementation independent of it. Moreover, the nature of the VEM allows decoupling the approximation orders and the mesh grids associated with the domain and boundary methods, without the need of using special auxiliary variables (like mortar ones) for the coupling. Indeed, by exploiting the peculiar construction of the VEM, it is possible to add hanging nodes on the edges of the elements that belong to the artificial boundary, without significantly modifying the structure of the interior mesh.

For what concerns the one-equation BEM, we recall that it has been proposed in the well known
Johnson \& N\'ed\'elec Coupling (JNC) (see \cite{BrezziJohnson1979,JohnsonNedelec1980}) and it is based on a single Boundary Integral Equation (BIE) that involves the integral operators associated to the fundamental solution (and its normal derivative) of the Laplace equation.

In the recent work \cite{GaticaMeddahi2019}, in which a similar problem has been studied, the authors consider the Costabel \& Han Coupling (CHC) (see  \cite{Costabel1987, Han1990}) combined with an interior VEM. 
This approach yields to a symmetric and non-positive definite scheme but, involving a BIE of hypersingular type, turns out to be quite onerous from the computational point of view. Even if the CHC has been applied in several contexts, the JNC turns out to be very appealing from the engineering point of view, this latter being cheaper and easier to implement. We remark in addition that, unlikely in \cite{GaticaMeddahi2019}, we deal with the asymptotic condition \eqref{dirichlet_problem_3} that entails  \begin{equation}\label{eq:identity}
\int_{\Gamma}\lambda_e({\mathbf{y}}) \mathrm{d} \Gamma_{{\mathbf{y}}} = 0,
\end{equation}
where $\lambda_e({\mathbf{y}}):=\frac{\partial u_e}{\partial\mathbf{n}}(\mathbf{y})$ denotes the normal derivative of $u_e$ along the artificial boundary $\Gamma$. As a consequence, suitable spaces satisfying identity \eqref{eq:identity} have to be considered.    

For the discretization of our coupled problem we consider a full Galerkin approach based on a CVEM in the interior of the computational domain and on a BEM associated to basis functions chosen in such a way that \eqref{eq:identity} is satisfied.
 We study the proposed approach from the theoretical point of view in a quite general framework, and we provide optimal error estimates in the energy and in the weaker $L^2$-norm. 
 In particular, since we consider here curved domains, the use of curvilinear elements instead of polygonal ones, allows us to reach the optimal convergence rate for degrees of accuracy higher than 2, avoiding the sub-optimal rate caused by the approximation of the domain.
 
 By a careful study, we show that the source of the approximation error of the discrete solution, both in the energy and in the $L^2$-norm, can be split into two contributions: a boundary (BEM) and an interior (CVEM) one. In particular, we show that the boundary contribution behaves like $h_\partial^{k_\partial}$ ($h_\partial$ denoting the maximum edge length of the artificial boundary and $k_\partial$ representing the BEM polynomial degree of accuracy), and the interior one like $h_\circ^{k_\circ}$ ($h_\circ$ being the element diameter and $k_\circ$ the CVEM order degree). Hence, for $h_\partial \ll h_\circ$ and by fixing $k_\partial$, it results  that the bulk error dominates the boundary one up to a certain CVEM order, an aspect that allows obtaining a high accuracy of the global scheme with a low BEM order.

\
The paper is organized as follows: in the next section we present the model problem for the Poisson equation and its reformulation
in a bounded region, obtained by introducing the artificial boundary and its associated one equation Boundary Integral Non Reflecting Boundary Condition (BI-NRBC). In Section \ref{sec_3_weak_pb} we introduce the variational formulation of the problem restricted to the finite computational domain. In Section \ref{sec_4_galerkin_pb} we apply the Galerkin method and we prove error estimates in the energy and in the $L^2$-norm in an abstract framework, provided that suitable hypotheses are assumed. 
Then we show that these latter are satisfied by the CVEM-BEM approximation spaces introduced in Section \ref{sec_5_discrete_scheme}. Finally, in the last section we detail the choice of the particular basis functions used for the approximation of the normal derivative unknown, and we present some numerical test which confirm the theoretical results.

\section{The model problem}\label{sec_2_model_pb}
Let $\Omega_{e}:=\mathbf{R}^{2}\setminus\overline{\Omega}_{\tiny{0}}$ be an unbounded domain, exterior to an open bounded domain $\Omega_{\tiny{0}}\subset\mathbf{R}^{2}$, and denote by $\Gamma_{\tiny{0}}:=\partial {\Omega}_{e}$ its Lipschitz boundary having positive Haussdorf measure (see Figure \ref{fig:rob_domain} (a)).
We consider the exterior Dirichlet Poisson problem \eqref{dirichlet_problem}
in the unknown solution $u_{e}$, where $f\in L^{2}(\Omega_{e})$ represents a source term having a compact support in $\Omega_e$.

To determine the solution $u_{e}$ of Problem (\ref{dirichlet_problem}) by means of an interior domain method, we surround the physical obstacle $\Omega_0$ by an artificial boundary $\Gamma$; this allows decomposing $\Omega_{e}$ into a finite computational domain $\Omega$, bounded internally by $\Gamma_0$ and externally by $\Gamma$, and an infinite residual one, denoted by $\Omega_{\infty}$ (see Figure \ref{fig:rob_domain} (b)). For the theoretical analysis of the numerical approach we propose, we need to assume that $\Gamma_0$ consists of a finite number of curves of class $C^{m+1}$, with $m\geq0$, and that $\Gamma$ is a contour of class $C^\infty$.
\begin{figure}[H]
	\centering
	\includegraphics[width=0.65\textwidth]{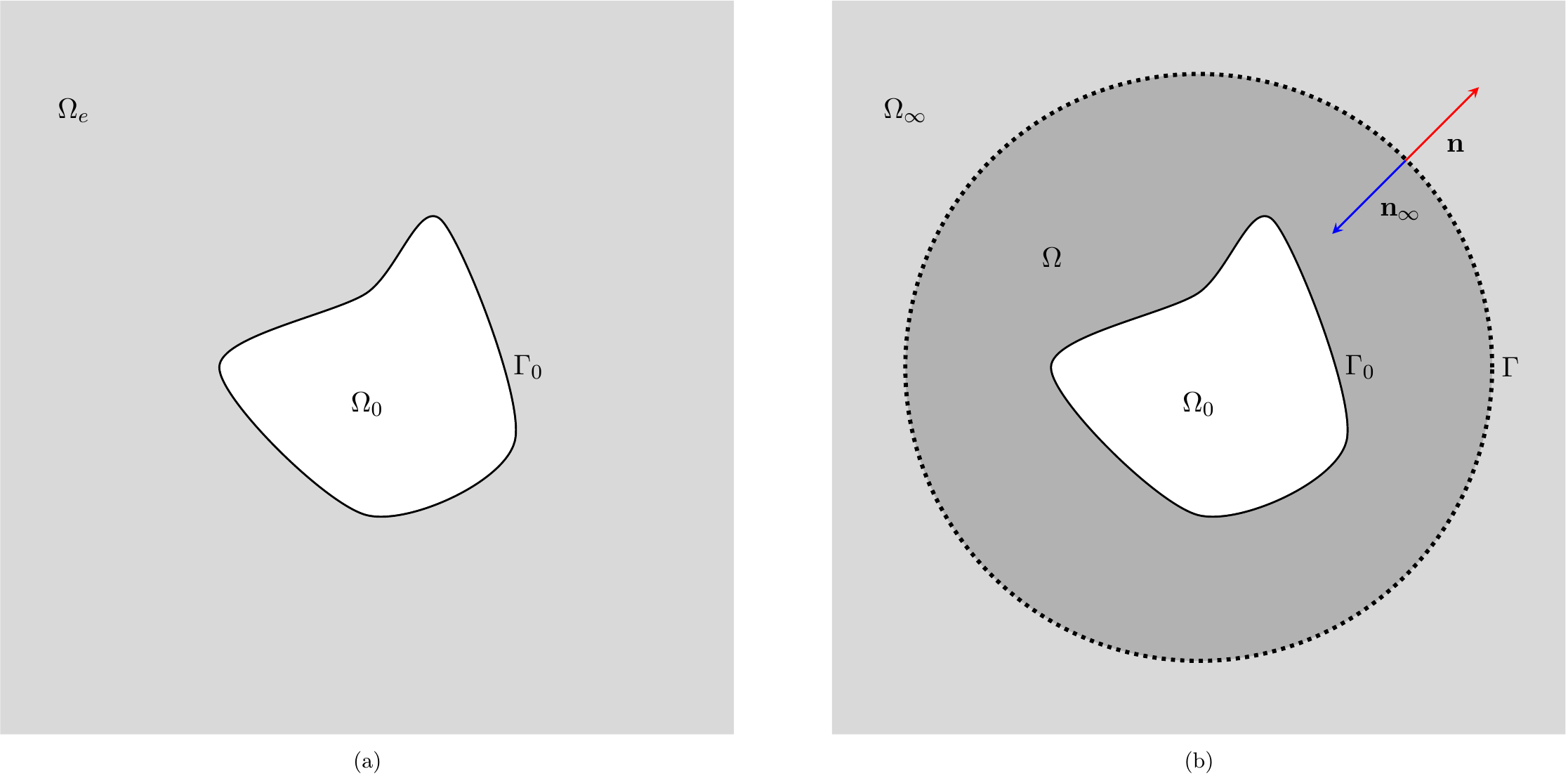}
	\caption{Model problem setting.}
	\label{fig:rob_domain}
\end{figure}
\noindent 
Denoting by $u$ and $u_{\infty}$ the restrictions of the solution $u_{e}$ to $\Omega$ and $\Omega_{\infty}$ respectively, and by $\mathbf{n}$ and $\mathbf{n}_{\infty}$ the unit normal vectors on $\Gamma$ pointing outside $\Omega$ and $\Omega_{\infty}$ (consequently $\mathbf{n}_{\infty}=-\mathbf{n}$), we consider the following compatibility and equilibrium conditions on $\Gamma$:
\begin{equation}
\label{compatibility_condition}
u(\mathbf{x}) =  u_{\infty}(\mathbf{x}), \qquad        \frac{\partial u}{\partial{\mathbf{n}}}(\mathbf{x})=-\frac{\partial u_{\infty}}{\partial{\mathbf{n}_{\infty}}}(\mathbf{x}),	\qquad \mathbf{x}\in \Gamma.
\end{equation}

In the above relations and in the sequel we omit, for simplicity, the use of the trace operators to indicate the restriction of $H^1$ functions to the boundary $\Gamma$ from the exterior or interior.

Assuming, for simplicity, that $\Gamma$ is chosen such that $\text{supp}(f)$ is a bounded subset of $\Omega$, the following Kirchhoff's formula 
\begin{equation}\label{boundary_integral_representation}
u_{\infty}(\mathbf{x})=\int_{\Gamma}G(\mathbf{x},\mathbf{y})\frac{\partial u_{\infty}}{\partial\mathbf{n}_{\infty}}(\mathbf{y})\,\dd\Gamma_{\mathbf{y}}-\int_{\Gamma}\frac{\partial G}{\partial\mathbf{n}_{\infty,\mathbf{y}}}(\mathbf{x},\mathbf{y})u_{\infty}(\mathbf{y})\,\dd\Gamma_{\mathbf{y}} + \alpha, \quad \mathbf{x}\in\Omega_{\infty}\setminus\Gamma,
\end{equation}
allows us to represent the solution $u_{\infty}$ in $\Omega_{\infty}$. In \eqref{boundary_integral_representation},  $G$ and $\partial G/\partial\mathbf{n}_{\infty,\mathbf{y}}$  denote, respectively, the fundamental solution of the 2D Laplace equation and its normal derivative with respect to the unit vector $\mathbf{n}_{\infty,\mathbf{y}}$ having initial point in $\mathbf{y} \in\Gamma$. Their expression is given by
\begin{equation*}\label{kernels}
G(\mathbf{x},\mathbf{y})=-\frac{1}{2\pi}\log r \quad \text{and} \quad \frac{\partial G}{\partial\mathbf{n}_{\infty,\mathbf{y}}}(\mathbf{x},\mathbf{y})=\frac{1}{2\pi}\frac{\mathbf{r}\cdot\mathbf{n}_{\infty,\mathbf{y}}}{r^{2}},
\end{equation*}
where $r=\| \mathbf{r} \|=\|\mathbf{x}-\mathbf{y}\|$.
It is known that the trace of \eqref{boundary_integral_representation} on $\Gamma$ reads
\begin{equation}\label{boundary_integral_equation_operators}
\frac{1}{2}u_{\infty}(\mathbf{x})-\text{V}\frac{\partial u_{\infty}}{\partial\mathbf{n}_{\infty}}({\mathbf{x}})-\text{K}u_{\infty}(\mathbf{x}) - \alpha=0, \qquad \mathbf{x}\in\Gamma,
\end{equation}
where $\text{V} \colon H^{-\nicefrac{1}{2}}(\Gamma)\to H^{\nicefrac{1}{2}}(\Gamma)$ and $\text{K}\colon H^{\nicefrac{1}{2}}(\Gamma)\to H^{\nicefrac{1}{2}}(\Gamma)$ represent, respectively, the continuous (see \cite{HsiaoWendland2008}) single- and double-layer integral operators, defined by 
\begin{equation*}\label{single_layer_operator}
\text{V}\psi(\mathbf{x}):=\int_{\Gamma}G(\mathbf{x},\mathbf{y})\psi(\mathbf{y})\,\dd\Gamma_{\mathbf{y}}, \qquad \mathbf{x}\in\Gamma
\end{equation*}
and 
\begin{equation*}\label{single_layer_operator}
\text{K}\phi(\mathbf{x}):=-\int_{\Gamma}\frac{\partial G}{\partial\mathbf{n}_{\infty,\mathbf{y}}}(\mathbf{x},\mathbf{y})\phi(\mathbf{y})\,\dd\Gamma_{\mathbf{y}}, \qquad \mathbf{x}\in\Gamma.
\end{equation*}
\noindent
To determine the solution of Problem \eqref{dirichlet_problem} in the finite computational domain $\Omega$, we impose \eqref{boundary_integral_equation_operators} as BI-NRBC on $\Gamma$.
In particular, introducing the additional unknown $\displaystyle \lambda({\mathbf{y}}):=\frac{\partial u}{\partial\mathbf{n}}(\mathbf{y})$ and taking into account (\ref{compatibility_condition}), the new problem defined in $\Omega$ takes the form:
\begin{subequations}\label{model_problem_full}
	\begin{empheq}[left=\empheqlbrace]{align}
	\label{model_problem_full_1} &-\Delta u(\mathbf{x})=f(\mathbf{x})&  &\mathbf{x}\in \Omega\\
&u(\mathbf{x})=0& 						 &\mathbf{x}\in \Gamma_0\\
	\label{model_problem_full_3} &\frac{1}{2}u(\mathbf{x})+\text{V}\lambda({\mathbf{x}})-\text{K}u(\mathbf{x})- \alpha=0& &\mathbf{x}\in\Gamma.
	\end{empheq}
\end{subequations}
\noindent
We point out that the asymptotic conditions \eqref{dirichlet_problem_3} coupled with \eqref{model_problem_full_3} imply that $\langle\lambda,1\rangle_{\Gamma} = 0$,
where $\langle \cdot,\cdot \rangle_{\Gamma}$ denotes the duality pairing between  $H^{-\nicefrac{1}{2}}(\Gamma)$ and $H^{\nicefrac{1}{2}}(\Gamma)$.
This justifies the introduction of the space $H_{0}^{-\nicefrac{1}{2}}(\Gamma):=\left\{\lambda\in H^{-\nicefrac{1}{2}}(\Gamma) \ : \ \langle \lambda,1\rangle_{\Gamma}=0\right\}$ in which we will look for the unknown $\lambda$.

\section{The variational formulation}\label{sec_3_weak_pb}

Let us introduce the bilinear form  $a:H^{1}(\Omega)\times H^{1}(\Omega)\rightarrow\mathbf{R}$
\begin{equation*}
a(u,v):=\int\limits_{\Omega}\nabla u(\mathbf{x})\cdot\nabla v(\mathbf{x})\,\dd\mathbf{x}.
\end{equation*}
The variational formulation of Problem (\ref{model_problem_full}) consists in finding $u\in H_{0,\Gamma_0}^{1}(\Omega):= \{ u \in H^{1}(\Omega) : u = 0 \, \text{ on } \Gamma_0\}$ and $\lambda\in H_0^{-\nicefrac{1}{2}}(\Gamma)$ such that 
\small
\begin{subequations}\label{model_problem_variational}
	\begin{empheq}[left=\empheqlbrace]{align}
	\label{model_problem_variational_1} &a(u,v)-\langle \lambda,v\rangle_{\Gamma}=(f,v)_{L^2(\Omega)}&  &\forall\, v\in H^{1}_{0,\Gamma_{0}}(\Omega),\\
	\label{model_problem_variational_2} &\langle \mu,\left(\frac{1}{2}\text{I}-\text{K}\right)u\rangle_{\Gamma}+\langle \mu,\text{V}\lambda\rangle_{\Gamma}=0 & 						 &\forall\, \mu\in H_0^{-\nicefrac{1}{2}}(\Gamma),
	\end{empheq}
\end{subequations}
\normalsize
\noindent
where $\text{I}$ stands for the identity operator and $(\cdot,\cdot)_{L^2(\Omega)}$ denotes the $L^{2}(\Omega)$-inner product.
It is worth noting that, since we test Equation \eqref{model_problem_full_3} with $\mu\in H_0^{-\nicefrac{1}{2}}(\Gamma)$, satisfying by definition $\langle \mu,1\rangle_{\Gamma}=0$, the
unknown constant $\alpha$ does not appear in the variational formulation \eqref{model_problem_variational}. Nevertheless, the asymptotic
behaviour $\alpha$ is intrinsic to the interior domain problem, and it can be recovered by the numerical scheme when choosing $\Gamma$ sufficiently far from the obstacle (see Example \ref{example_2}).

 To reformulate the above problem in operator form, following \cite{JohnsonNedelec1980}, we introduce the Hilbert space $V:=H^{1}_{0,\Gamma_{0}}(\Omega)\times H_0^{-\nicefrac{1}{2}}(\Gamma)$, equipped with the norm
\begin{equation*}\label{norm_V}
\left\|\hat{u}\right\|_{V}^{2}:=\left\|u\right\|_{H^{1}(\Omega)}^{2} + \left\|\lambda\right\|_{H^{-\nicefrac{1}{2}}(\Gamma)}^{2}, \quad \text{for} \ \hat u = (u,\lambda).
\end{equation*}
Then, we define the bilinear form $\mathcal{A}:V\times V\rightarrow\mathbf{R}$ 
\begin{equation*}
\mathcal{A}(\hat{u},\hat{v}):=a(u,v)-\langle \lambda,v\rangle_{\Gamma}+\langle \mu,u\rangle_{\Gamma}+2\langle \mu,\text{V}\lambda\rangle_{\Gamma}-2\langle \mu,\text{K}u\rangle_{\Gamma},
\end{equation*}
for $\hat u = (u,\lambda)$ and $\hat v = (v,\mu)$, and the linear continuous operator $\mathcal{L}_{f}:V\rightarrow\mathbf{R}$ 
\begin{equation*}\label{def_lf_kappa}
\mathcal{L}_{f}(\hat{v}):=(f,v)_{L^2(\Omega)}.
\end{equation*}
\noindent
Hence, we rewrite Problem (\ref{model_problem_variational}) as follows: find $\hat{u}\in V$ such that
\begin{equation}\label{model_problem_operator}
\mathcal{A}(\hat{u},\hat{v})=\mathcal{L}_{f}(\hat{v}) \qquad \forall\,\hat{v}\in V,
\end{equation}
\noindent
whose well-posedness has been proved in \cite{JohnsonNedelec1980} (see Lemma 2).
\\

\noindent
Finally, for the forthcoming analysis, it is convenient to rewrite $\mathcal{A}=\mathcal{B}+\mathcal{K}$ where the 
the bilinear forms
$\mathcal{B},\mathcal{K}:V\times V\rightarrow\mathbf{R}$ are defined as follows:
\begin{equation}\label{definition_B_K}
 \mathcal{B}(\hat{u},\hat{v}) := a(u,v)-\langle \lambda,v\rangle_{\Gamma}+\langle \mu,u\rangle_{\Gamma}+2\langle \mu,\text{V}\lambda\rangle_{\Gamma}, \,\,\,
 \mathcal{K}(\hat{u},\hat{v}) := -2\langle \mu,\text{K}u\rangle_{\Gamma}. 
\end{equation}
\noindent

In the following sections, for the solution of Problem \eqref{model_problem_operator}, we will describe a numerical approach consisting of a CVEM-BEM coupling. This method and the corresponding theoretical analysis is based on that proposed  for the Helmholtz problem in \cite{DesiderioFallettaFerrariScuderi2021}, to which we refer whenever the theoretical results therein proved hold in our context as well. It is worth noting that the theoretical analysis for the exterior Poisson problem cannot be obtained as a particular sub-case of the Helmholtz one given in \cite{DesiderioFallettaFerrariScuderi2021}, by simply choosing the wave number equal to zero. Indeed, the NRBC associated to the Laplace equation is different from that of the Helmholtz one, both for what concerns the kernel functions appearing in the boundary integral operators  and the choice of the discrete function spaces for the approximation of the unknown $\lambda$. In fact, in this case, since we do not know a priori the asymptotic value $\alpha$ in \eqref{model_problem_full_3}, the choice of the space $H_0^{-\nicefrac{1}{2}}(\Gamma)$ becomes mandatory and, as a consequence, a proper discrete subspace of it is needed. Moreover,
another important novelty of the theoretical study, with respect to that of \cite{DesiderioFallettaFerrariScuderi2021}, consists in the use of decoupled degrees of approximation for the interior CVEM and the BEM. This allows in particular the application of the CVEM with order higher than that of the BEM, a key aspect for the global scheme since the BEM requires high efforts to efficiently compute the associated system matrices. 

\section{The numerical method}\label{sec_4_galerkin_pb}

To describe the numerical approach we propose to solve \eqref{model_problem_operator}, we start by introducing a suitable decomposition of the domain $\Omega$, which consists of generic elements and is not limited to the more commonly used triangles.

Let us denote by $E$ a generic ``polygon'' having at most one curved edge and by $h_E$ its diameter; similarly we denote by $e$ a generic ``edge'', eventually curved, and by $h_e$ its length. We introduce a sequence $\{\mathcal{T}_{h_\circ}\}_{h_\circ}$ of unstructured meshes $\mathcal{T}_{h_\circ}=\left\{E\right\}$, which cover the domain $\Omega$, where $h_\circ:=\max_{E\in\mathcal{T}_{h_\circ}}h_E$. We denote by $\mathcal{T}_{h_\partial}^{\Gamma}$ the decomposition of the artificial boundary $\Gamma$ which, according to the regularity assumption required for $\Gamma$, consists of curvilinear parts.
 The subscript $h_\partial$ denotes the mesh size defined by $h_\partial = \max_{e \in \mathcal{T}^{\Gamma}_{h_\partial}}h_e$.

We suppose there exists a constant $\varrho>0$ such that for each $h_\circ$ and for each element $E\in\mathcal{T}_{h_\circ}$, $E$ is star-shaped with respect to a ball of radius greater than $\varrho h_{E}$ and the length of any (eventually curved) edge of $E$ is greater than $\varrho h_{E}$.

For any $k\in\mathbf{N}$, let $P_{k}(E)$ be the space of polynomials of degree $k$ defined on $E$, and $\Pi_{k}^{\nabla,E}:H^{1}(E)\rightarrow P_{k}(E)$ be the local polynomial $H^{1}$-projection, defined such that for $v\in H^{1}(E)$:
\begin{equation*}
\begin{cases}
\displaystyle\int_{E}\nabla\Pi_{k}^{\nabla,E}v \cdot \nabla q\,\dd E = \displaystyle\int_{E}\nabla v \cdot\nabla q\,\dd E \qquad  \forall\, q\in P_{k}(E),\\[10pt]
\displaystyle\int_{\partial E}\Pi_k^{\nabla,E} v\,\dd s = \displaystyle\int_{\partial E}v\,\dd s.
\end{cases}
\end{equation*}
The local projector $\Pi_{k}^{\nabla,E}$ can be naturally extended to the global one $\Pi_{k}^{\nabla}:H^{1}(\Omega)\rightarrow P_{k}(\mathcal{T}_{h_\circ})$ as follows:
\begin{equation*}
\left(\Pi_{k}^{\nabla} v\right)_{\vert_E}:=\Pi_{k}^{\nabla,E}v_{\vert_E} \quad\forall\, v\in H^{1}(\Omega),
\end{equation*}
$P_k(\mathcal{T}_{h_\circ})$ being the space of piecewise polynomials with respect to the decomposition $\mathcal{T}_{h_\circ}$ of $\Omega$. 
Moreover, let $\Pi_{k}^{0,E}:L^{2}(E)\rightarrow P_{k}(E)$ be the local polynomial $L^{2}$-projection operator, defined such that for $v\in L^{2}(E)$ 
\begin{equation*}
\int_{E}\Pi_{k}^{0,E}v \, q\,\dd E= \int_{E} v \, q\,\dd E \qquad \forall\, q\in P_{k}(E).
\end{equation*}
By introducing the local bilinear form $a^{\text{\tiny{E}}}:H^{1}(E)\times H^{1}(E)\rightarrow\mathbf{R}$ given by
\begin{align} \label{ae}
a^{\text{\tiny{E}}}(u,v):=\int\limits_{E}\nabla u(\mathbf{x})\cdot\nabla v(\mathbf{x})\,\dd\mathbf{x},
\end{align}
we can write $a(u,v)=\sum\limits_{E\in\mathcal{T}_{\boldsymbol{h}}}a^{\text{\tiny{E}}}(u,v)$.

Finally, we introduce the product space $H^1(\mathcal{T}_{h_\circ}) := \prod_{E\in\mathcal{T}_{h_\circ}} H^1(E)$ and define the associated broken $H^1$-norm:
$$\| v \|_{H^1(\mathcal{T}_{h_\circ})}^2 := \sum_{E\in \mathcal{T}_{h_\circ}} \| v \|^2_{H^1(E)}.$$
To apply the Galerkin method to Problem (\ref{model_problem_operator}), we introduce the discrete spaces $Q_{h_\circ}^{k_\circ}\subset H_{0,\Gamma_{0}}^{1}(\Omega)$ and $X_{h_\partial}^{k_\partial}\subset H_0^{-\nicefrac{1}{2}}(\Gamma)$ associated to the meshes $\mathcal{T}_{h_\circ}$ and $\mathcal{T}_{h_\partial}^{\Gamma}$, respectively, and the product space  $V_{\boldsymbol{h}}^{\boldsymbol{k}}:=Q_{h_\circ}^{k_\circ}\times X_{h_\partial}^{k_\partial}$. Then, the Galerkin method consists in  
finding $\hat{u}_{\boldsymbol{h}}\in V_{\boldsymbol{h}}^{\boldsymbol{k}}$ such that
\begin{equation}\label{model_problem_galerkin}
\mathcal{A}_{\boldsymbol{h}}(\hat{u}_{\boldsymbol{h}},\hat{v}_{\boldsymbol{h}}):=\mathcal{B}_{\boldsymbol{h}}(\hat{u}_{\boldsymbol{h}},\hat{v}_{\boldsymbol{h}})+\mathcal{K}(\hat{u}_{\boldsymbol{h}},\hat{v}_{\boldsymbol{h}})=\mathcal{L}_{f,\boldsymbol{h}}(\hat{v}_{\boldsymbol{h}}) \quad \forall\,\hat{v}_{\boldsymbol{h}}\in V_{\boldsymbol{h}}^{\boldsymbol{k}},
\end{equation}
\noindent
where $\mathcal{A}_{\boldsymbol{h}},\mathcal{B}_{\boldsymbol{h}}:V_{\boldsymbol{h}}^{\boldsymbol{k}}\times V_{\boldsymbol{h}}^{\boldsymbol{k}}\rightarrow\mathbf{R}$ and $\mathcal{L}_{f,\boldsymbol{h}}:V_{\boldsymbol{h}}^{\boldsymbol{k}}\rightarrow\mathbf{R}$ are suitable approximations of $\mathcal{A},\mathcal{B}$ and $\mathcal{L}_{f}$, respectively. 

Proceeding analogously as in \cite{DesiderioFallettaFerrariScuderi2021}, we introduce  sufficient conditions on the discrete spaces, on the bilinear form $\mathcal{B}_{\boldsymbol{h}}$ and on the linear operator $\mathcal{L}_{f,\boldsymbol{h}}$ to guarantee existence and uniqueness of the solution $\hat{u}_{\boldsymbol{h}}\in V_{\boldsymbol{h}}^{\boldsymbol{k}}$ and to prove convergence error estimates.

In particular we assume: for any $s \geq 1$

\begin{enumerate}[label=(H1.\alph*), ref=H1.\alph*]
	{\setlength\itemindent{5pt} \item\label{H1.a}  approximation in $Q_{h_\circ}^{k_\circ}$: for all $v \in H^{s+1}(\Omega)$\\ $\underset{v_{h_\circ}\in Q_{h_\circ}^{k_\circ}}{\text{inf}} \left\|v-v_{h_\circ}\right\|_{H^{1}(\Omega)} \apprle h_\circ^{\text{min}(s,k_\circ)} \left\|v\right\|_{H^{s+1}(\Omega)}$}; 	{\setlength\itemindent{5pt} \item\label{H1.b} approximation in $X_{h_\partial}^{k_\partial}$: for all $\mu \in H^{s-\nicefrac{1}{2}}(\Gamma) \cap H_0^{-\nicefrac{1}{2}}(\Gamma)$}\\ $\underset{\mu_{h_\partial}\in  X_{h_\partial}^{k_\partial}}{\text{inf}} \left\|\mu-\mu_{h_\partial}\right\|_{H^{-\nicefrac{1}{2}}(\Gamma)} \apprle h_\partial^{\text{min}(s,k_\partial)} \left\|\mu \right\|_{H^{s-\nicefrac{1}{2}}(\Gamma)}.$ 
\end{enumerate}

In the above assumptions the notation $Q_1 \apprle Q_2$ (as well as $Q_1 \apprge Q_2$ in what follows) means that the quantity $Q_1$ is bounded from above (resp. from below) by $c\,Q_2$, where $c$ is a positive constant that, unless explicitly stated, does not depend on any relevant parameter involved in the definition of $Q_1$ and $Q_2$.

According to the definition of the $\|\cdot\|_V$ norm, \eqref{H1.a} and \eqref{H1.b} ensure the following approximation property for the product space $V_{\boldsymbol{h}}^{\boldsymbol{k}}$: \\ 

for $s \ge 1$, given $\hat{v}=(v,\mu)\in H^{s+1}(\Omega)\times (H^{s-\nicefrac{1}{2}} (\Gamma) \cap H_0^{-\nicefrac{1}{2}}(\Gamma))$, there exists $\hat{v}_{\boldsymbol{h}}^{I}=(v_{h_\circ}^{I},\mu_{h_\partial}^{I})\in V_{\boldsymbol{h}}^{\boldsymbol{k}}$ such that
\begin{equation}\label{int_property_0} 
\left\|\hat{v} - \hat{v}_{\boldsymbol{h}}^{I}\right\|_V\apprle h_\circ^{\text{min}(s,k_\circ)} \left\|v\right\|_{H^{s+1}(\Omega)}+ h_\partial^{\text{min}(s,k_\partial)} \left\|\mu \right\|_{H^{s-\nicefrac{1}{2}}(\Gamma)}.
\end{equation}

Recalling that the evaluation of the bilinear form $\mathcal{B}$ on elements of $V_{\boldsymbol{h}}^{\boldsymbol{k}}$ is well defined provided that $a(\cdot,\cdot)$ is split into the sum of the local contributions $a^{\text{\tiny{E}}}(\cdot,\cdot)$, and assuming that the approximated bilinear form $\mathcal{B}_{\boldsymbol{h}}$ is well defined on the space $H^1(\mathcal{T}_{h_\circ})$, we further assume:

\begin{enumerate}[label=(H2.\alph*), ref=H2.\alph*]
	{\setlength\itemindent{10pt} \item\label{H2.a} $k_\circ$-consistency: for all $\hat{q}\in P_{k_\circ}(\mathcal{T}_{h_\circ})\times X_{h_\partial}^{k_\partial}$ and $\hat{v}_{\boldsymbol{h}}\in V_{\boldsymbol{h}}^{\boldsymbol{k}}$
		\begin{equation*}
		\mathcal{B}_{\boldsymbol{h}}(\hat{q},\hat{v}_{\boldsymbol{h}}) = \mathcal{B}(\hat{q},\hat{v}_{\boldsymbol{h}}), \qquad \mathcal{B}_{\boldsymbol{h}}(\hat v_{\boldsymbol{h}},\hat{q}) = \mathcal{B}(\hat v_{\boldsymbol{h}},\hat{q});
		\end{equation*}}
	{\setlength\itemindent{10pt} \item\label{H2.b} continuity: for all $\hat{v}_{\boldsymbol{h}},\hat{w}_{\boldsymbol{h}}\in V_{\boldsymbol{h}}^{\boldsymbol{k}}$
		\begin{equation*}	
		 \abs*{\B_{\boldsymbol{h}}(\hat{v}_{\boldsymbol{h}},\hat{w}_{\boldsymbol{h}})}\apprle\left\|\hat{v}_{\boldsymbol{h}}\right\|_V \left\|\hat{w}_{\boldsymbol{h}}\right\|_V;
	 \end{equation*}}
	{\setlength\itemindent{10pt} \item\label{H2.c} ellipticity: for all $\hat{w}_{\boldsymbol{h}} \in V_{\boldsymbol{h}}^{\boldsymbol{k}}$
		\begin{equation*}
		\mathcal{B}_{\boldsymbol{h}}(\hat{w}_{\boldsymbol{h}},\hat{w}_{\boldsymbol{h}}) \apprge\left\|\hat{w}_{\boldsymbol{h}}\right\|_{V}^{2}.
		\end{equation*}}
\end{enumerate}

In the following theorem we show the validity of the inf-sup condition for the discrete bilinear form $\A_{\boldsymbol{h}}$.
\begin{Theorem}\label{JN4}
	Assuming \eqref{H1.a}, \eqref{H1.b} and  \eqref{H2.a}--\eqref{H2.c}, for $h_\circ$ and $h_\partial$ small enough, it holds that
	\begin{equation*}
	\sup_{\hat v_{\boldsymbol{h}}\in V_{\boldsymbol{h}}^{\boldsymbol{k}}, \hat v_{\boldsymbol{h}}\ne 0} \frac{\A_{\boldsymbol{h}}(\hat w_{\boldsymbol{h}} , \hat v_{\boldsymbol{h}})}{\norma{\hat v_{\boldsymbol{h}}}_V} \apprge \norma{\hat w_{\boldsymbol{h}}}_{V} \quad \forall \, \hat w_{\boldsymbol{h}} \in V_{\boldsymbol{h}}^{\boldsymbol{k}}.
	\end{equation*}
\end{Theorem}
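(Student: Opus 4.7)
The plan is to follow the classical Schatz-type argument for coercive-plus-compact problems, adapted to the non-conforming flavour of VEM through the $k_\circ$-consistency assumption \eqref{H2.a}. The strategy splits $\A_{\boldsymbol{h}} = \B_{\boldsymbol{h}} + \K$: the first piece supplies the discrete control of $\norma{\hat w_{\boldsymbol{h}}}_V$ via \eqref{H2.c}, while the second is harmless in the limit because the double-layer operator $\mathrm{K}\colon H^{1/2}(\Gamma)\to H^{1/2}(\Gamma)$ is compact (since $\Gamma\in C^\infty$).

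I would argue by contradiction. Suppose the claim fails: then there are sequences $\boldsymbol{h}_n\to 0$ and $\hat w_n\in V_{\boldsymbol{h}_n}^{\boldsymbol{k}}$ with $\norma{\hat w_n}_V=1$ and $\sup_{\hat v_n\in V_{\boldsymbol{h}_n}^{\boldsymbol{k}}, \hat v_n\neq 0}\A_{\boldsymbol{h}_n}(\hat w_n,\hat v_n)/\norma{\hat v_n}_V\to 0$. Writing $\hat w_n = (w_n,\lambda_n)$, the sequence is bounded in $V$, so up to a subsequence $\hat w_n \rightharpoonup \hat w^*=(w^*,\lambda^*)$ in $V$. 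By the compact embeddings $H^1(\Omega)\hookrightarrow L^2(\Omega)$ and the compactness of $\mathrm{K}$ acting on the trace of $w_n$, we also get the strong convergences needed below.

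The first key step is to show $\hat w^* = 0$ by passing to the limit against an arbitrary $\hat v\in V$. Given such $\hat v$, I would pick $\hat v_n^I\in V_{\boldsymbol{h}_n}^{\boldsymbol{k}}$ with $\hat v_n^I\to\hat v$ in $V$ using the approximation property \eqref{int_property_0}. The delicate point is that $\A_{\boldsymbol{h}_n}(\hat w_n,\hat v_n^I)$ differs from $\A(\hat w_n,\hat v_n^I)$ on the VEM contribution, so one cannot simply pass to the limit. Here I would invoke \eqref{H2.a}: introducing $\hat q_n = (\Pi_{k_\circ}^\nabla w_n,\lambda_{n})$ (or a similar piecewise-polynomial surrogate of $\hat w_n$), the consistency identity $\B_{\boldsymbol{h}_n}(\hat q_n,\hat v_n^I)=\B(\hat q_n,\hat v_n^I)$ reduces the gap between $\B_{\boldsymbol{h}_n}$ and $\B$ to a quantity of the order $\|\hat w_n-\hat q_n\|_V$, which tends to zero by standard polynomial approximability on each $E$ together with weak convergence. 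Combined with the continuity of $\mathrm{K}$ and the strong trace convergence of $w_n$, this produces the identity $\A(\hat w^*,\hat v)=0$ for every $\hat v\in V$. By uniqueness of the continuous solution proved in \cite{JohnsonNedelec1980}, $\hat w^*=0$, so in particular $w_n\to 0$ strongly in $L^2(\Omega)$ and along $\Gamma$.

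The last step produces the contradiction. Ellipticity \eqref{H2.c} and the definition of $\mathcal{K}$ give
\begin{equation*}
1 = \norma{\hat w_n}_V^2 \apprle \B_{\boldsymbol{h}_n}(\hat w_n,\hat w_n) = \A_{\boldsymbol{h}_n}(\hat w_n,\hat w_n) + 2\langle \lambda_n,\mathrm{K}w_n\rangle_\Gamma.
\end{equation*}
The first term is bounded by $\sup_{\hat v_n}\A_{\boldsymbol{h}_n}(\hat w_n,\hat v_n)/\norma{\hat v_n}_V \cdot \norma{\hat w_n}_V\to 0$. For the second, the compactness of $\mathrm{K}\colon H^{1/2}(\Gamma)\to H^{1/2}(\Gamma)$ together with $w_n\rightharpoonup 0$ on $\Gamma$ forces $\mathrm{K}w_n\to 0$ strongly in $H^{1/2}(\Gamma)$, and since $\lambda_n$ is bounded in $H^{-1/2}(\Gamma)$ the duality pairing tends to zero. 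Hence $1\apprle 0$, a contradiction. I expect the main obstacle to be precisely the consistency bridge in the middle step: handling the VEM non-conformity so that $\B_{\boldsymbol{h}_n}(\hat w_n,\hat v_n^I)\to \B(\hat w^*,\hat v)$ requires carefully exploiting \eqref{H2.a} together with polynomial approximability of $w_n$ elementwise and boundedness in the broken $H^1$-norm.
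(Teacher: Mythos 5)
Your route is genuinely different from the paper's: the paper does not argue by compactness and contradiction, but constructively transfers the continuous inf-sup of Johnson--N\'ed\'elec (their Lemma 4 supplies, for each $\hat w_{\boldsymbol{h}}$, a $\hat v_{\boldsymbol{h}}$ with $\norma{\hat v_{\boldsymbol{h}}}_V \apprle \norma{\hat w_{\boldsymbol{h}}}_V$ and $\A(\hat w_{\boldsymbol{h}},\hat v_{\boldsymbol{h}}) \apprge (1-h_\circ-h_\partial)\norma{\hat w_{\boldsymbol{h}}}_V^2$) and then replaces $\hat v_{\boldsymbol{h}}$ by a modified discrete test function $\hat v_{\boldsymbol{h}}^*$ (Lemma 4.5 of the cited companion paper) satisfying $\B_{\boldsymbol{h}}(\cdot,\hat v_{\boldsymbol{h}}^*)=\B(\cdot,\hat v_{\boldsymbol{h}})$, controlling the leftover $\K$-term through the negative-norm bound $\norma{\mu_{h_\partial}^*-\mu_{h_\partial}}_{H^{-\nicefrac{3}{2}}(\Gamma)} \apprle (h_\circ+h_\partial)\norma{\hat v_{\boldsymbol{h}}}_V$ and the smoothing property $\text{K}\colon H^{\nicefrac{1}{2}}(\Gamma)\to H^{\nicefrac{3}{2}}(\Gamma)$. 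A Schatz-type argument could in principle also work and would be more ``abstract'', but as written yours has a genuine gap precisely at the step you flag as the main obstacle.

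The gap is the claim that, with $\hat q_n=(\Pi^{\nabla}_{k_\circ}w_n,\lambda_n)$, the quantity $\norma{\hat w_n-\hat q_n}_V$ tends to zero ``by standard polynomial approximability together with weak convergence''. It does not: $w_n$ carries only a uniform $H^1(\Omega)$ bound ($\norma{\hat w_n}_V=1$), and for a function with no more than $H^1$ regularity the elementwise $H^1$-projection error $\norma{w_n-\Pi^{\nabla}_{k_\circ}w_n}_{H^1(\mathcal{T}_{h_n})}$ gains no power of $h_n$; it is merely bounded by $\abs{w_n}_{H^1(\Omega)}=O(1)$, and weak $H^1$ convergence (or strong $L^2$ convergence via Rellich) gives no control of the gradients. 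Hence the bridge $\B_{h_n}(\hat w_n,\hat v_n^I)\to\B(\hat w^*,\hat v)$, and with it the identification $\A(\hat w^*,\hat v)=0$ and the conclusion $\hat w^*=0$, is not established, and the final contradiction step (which is otherwise sound: ellipticity \eqref{H2.c}, compactness of $\text{K}$ on $H^{\nicefrac{1}{2}}(\Gamma)$ for $C^\infty$ $\Gamma$, boundedness of $\lambda_n$) has nothing to stand on. To repair it you should instead exploit the second identity in \eqref{H2.a}, i.e.\ consistency with a polynomial in the \emph{test} slot: restrict to $\hat v=(v,\mu)$ in a dense smooth subset of $V$, insert $\Pi^{\nabla}_{k_\circ}v$ (for which $\norma{v_n^I-\Pi^{\nabla}_{k_\circ}v}_{H^1(\mathcal{T}_{h_n})}\apprle h_n\norma{v}_{H^2(\Omega)}\to 0$, as in \eqref{questa_serve}), exactly as the paper does in the proof of Theorem \ref{Theorem:JN2}; note, however, that this requires continuity of $\B_{\boldsymbol{h}}$ with respect to the broken $H^1$-norm for arguments outside $V_{\boldsymbol{h}}^{\boldsymbol{k}}$, which \eqref{H2.b} as stated does not literally give, so it would have to be added as (or derived from) an additional hypothesis.
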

\begin{proof}	
Following the proof of Lemma 4 in \cite{JohnsonNedelec1980}, it is possible to assert that, for any $\hat w_{\boldsymbol{h}} \in V_{\boldsymbol{h}}^{\boldsymbol{k}}$, there exists $\hat v_{\boldsymbol{h}} \in V_{\boldsymbol{h}}^{\boldsymbol{k}}$ such that
	\begin{equation} \label{eq:JN4.5}
	\norma{\hat v_{\boldsymbol{h}}}_V \apprle \norma{\hat w_{\boldsymbol{h}}}_V
	\end{equation} and
	\begin{equation} \label{eq:JN4.6}
	\A(\hat w_{\boldsymbol{h}},\hat v_{\boldsymbol{h}}) \apprge (1-h_\circ-h_\partial)\norma{\hat w_{\boldsymbol{h}}}^2_V.
	\end{equation}
	By exploiting the decoupled assumptions \eqref{H1.a} and \eqref{H1.b} in Lemma 4.5 of \cite{DesiderioFallettaFerrariScuderi2021}, we obtain that for $\hat v_{\boldsymbol{h}} = (v_{h_\circ},\mu_{h_\partial}) \in V_{\boldsymbol{h}}^{\boldsymbol{k}} \subset V$ there exists a unique $\hat v_{\boldsymbol{h}}^* = (v_{h_\circ}^*,\mu_{h_\partial}^*) \in V_{\boldsymbol{h}}^{\boldsymbol{k}}$ such that 
	\begin{equation}\label{eq:Boh_utile}
	\B_{\boldsymbol{h}}(\hat w_{\boldsymbol{h}}, \hat v_{\boldsymbol{h}}^*) = \B(\hat w_{\boldsymbol{h}}, \hat v_{\boldsymbol{h}}) \quad \forall \,\hat w_{\boldsymbol{h}} \in V_{\boldsymbol{h}}^{\boldsymbol{k}}
	\end{equation}
	and
	\begin{equation}\label{eq:JN4.7b}	\norma{\mu_{h_\partial}^* - \mu_{h_\partial}}_{H^{-\nicefrac{3}{2}}(\Gamma)}\apprle (h_\circ + h_\partial) \norma{\hat v_{\boldsymbol{h}}}_V.
	\end{equation}
	Recalling \eqref{definition_B_K} and \eqref{model_problem_galerkin}, and using \eqref{eq:Boh_utile}, we get:
	\begin{align*} 
	\A_{\boldsymbol{h}}(\hat w_{\boldsymbol{h}}, \hat v_{\boldsymbol{h}}^*) & = \B_{\boldsymbol{h}}(\hat w_{\boldsymbol{h}}, \hat v_{\boldsymbol{h}}^*) + \K(\hat w_{\boldsymbol{h}}, \hat v_{\boldsymbol{h}}^*) = \B(\hat w_{\boldsymbol{h}}, \hat v_{\boldsymbol{h}})+ \K(\hat w_{\boldsymbol{h}},\hat v_{\boldsymbol{h}}^*) 
	\\ &  =\A(\hat w_{\boldsymbol{h}}, \hat v_{\boldsymbol{h}})  + \K(\hat w_{\boldsymbol{h}},\hat v_{\boldsymbol{h}}^*) - \K(\hat w_{\boldsymbol{h}},\hat v_{\boldsymbol{h}})  \\
	&  = \A(\hat w_{\boldsymbol{h}}, \hat v_{\boldsymbol{h}}) - 2 \langle \mu_{h_\partial}^* - \mu_{h_\partial},\text{K}  w_{\boldsymbol{h}} \rangle_{\Gamma}.
	\end{align*}
	By applying the H{\"o}lder's inequality and using \eqref{eq:JN4.7b} we have
	\begin{align}  \nonumber
	\abs*{\langle \mu_{h_\partial}^* - \mu_{h_\partial},\text{K} w_{h_\circ} \rangle_{H^{-\nicefrac{3}{2}}(\Gamma) \times H^{\nicefrac{3}{2}}(\Gamma)}} & \apprle  \norma{\mu_{h_\partial}^* - \mu_{h_\partial}}_{H^{-\nicefrac{3}{2}}(\Gamma)} \norma{\text{K} w_{h_\circ}}_{H^{\nicefrac{3}{2}}(\Gamma)}  \\ & \label{eq:JN4.15} \apprle (h_\circ+h_\partial) \norma{\hat v_{\boldsymbol{h}}}_V \norma{\text{K} w_{h_\circ}}_{H^{\nicefrac{3}{2}}(\Gamma)}.
	\end{align}
	Then, using the continuity of $K : H^{\nicefrac{1}{2}}(\Gamma) \to H^{\nicefrac{3}{2}}(\Gamma)$ (see \cite{JohnsonNedelec1980}, formula (2.11)) and the trace theorem, we obtain
	\begin{equation*} %
	\norm{\text{K} w_{h_\circ}}_{H^{\nicefrac{3}{2}}(\Gamma)} \apprle \norm{ w_{h_\circ}}_{H^{\nicefrac{1}{2}}(\Gamma)} \apprle \norma{w_{h_\circ}}_{H^1(\Omega)} \le \norma{\hat w_{\boldsymbol{h}}}_V,
	\end{equation*}
	which, together with \eqref{eq:JN4.15}, implies
	\begin{equation} \label{eq:JN4.16}
	\abs*{\langle \mu_{h_\partial}^* - \mu_{h_\partial},\text{K} w_{h_\circ} \rangle_{H^{-\nicefrac{3}{2}}(\Gamma) \times H^{\nicefrac{3}{2}}(\Gamma)}} \apprle (h_\circ + h_\partial) \norma{\hat v_{\boldsymbol{h}}}_V \norma{\hat w_{\boldsymbol{h}}}_V.
	\end{equation}
	Finally, combining \eqref{eq:JN4.6}, \eqref{eq:JN4.16} and \eqref{eq:JN4.5}, we get
	\begin{align*}
	\A_{\boldsymbol{h}}(\hat w_{\boldsymbol{h}}, \hat v_{\boldsymbol{h}}^*) & \apprge (1-h_\circ - h_\partial)\norma{\hat w_{\boldsymbol{h}}}_V^2  - (h_\circ + h_\partial)\norma{\hat w_{\boldsymbol{h}}}_V\norma{\hat v_{\boldsymbol{h}}}_V \\
	& \apprge (1-h_\circ - h_\partial)\norma{\hat w_{\boldsymbol{h}}}_V^2,
	\end{align*}
	whence, for $h_\circ$ and $h_\partial$ small enough, the claim follows.
\end{proof}
\noindent
Theorem \ref{JN4} allows us to prove the following convergence error estimate in the $V$-norm for Problem \eqref{model_problem_galerkin}.
\begin{Theorem}
	\label{Theorem:JN1}
	Assume there exist $k_\circ,k_\partial$ such that for all $s \ge 1$, \eqref{H1.a},\eqref{H1.b}, and \eqref{H2.a}--\eqref{H2.c}  hold. Furthermore, assume that, for all $s \ge 1$, there exists $\sigma_s : L^2(\Omega) \to \mathbf{R}^+$ such that 
	 
	\begin{enumerate}[label=(H3.\alph*), ref=H3.\alph*]
		{\setlength\itemindent{10pt} \item\label{H3.a} for all $\hat{v}_{\boldsymbol{h}} \in V_{\boldsymbol{h}}^{\boldsymbol{k}}$ $$\abs*{\mathcal{L}_{f}(\hat{v}_{\boldsymbol{h}}) - \mathcal{L}_{f,\boldsymbol{h}}(\hat{v}_{\boldsymbol{h}})} \apprle \left(h_\circ^{\text{min}(s,k_\circ)} + h_\partial^{\text{min}(s,k_\partial)}\right)\left\|\hat{v}_{\boldsymbol{h}}\right\|_{V} \, \sigma_s(f).$$}
	\end{enumerate}

	Then, for $h_\circ$ and $h_\partial$ small enough, Problem \eqref{model_problem_galerkin} admits a unique solution $\hat u_{\boldsymbol{h}} \in V_{\boldsymbol{h}}^{\boldsymbol{k}}$ and if $\hat u = (u,\lambda)$, solution of Problem \eqref{model_problem_operator}, satisfies $\hat u \in H^{s+1}(\Omega) \times H^{s-\nicefrac{1}{2}}(\Gamma)$, the following estimate holds
	\begin{equation*}
	\norma{\hat u - \hat u_{\boldsymbol{h}}}_V \apprle \left(h_\circ^{\text{min}(s,k_\circ)}+h_\partial^{\text{min}(s,k_\partial)}\right) \left( \norma{u}_{H^{s+1}(\Omega)}  + \sigma_s(f)\right).
	\end{equation*}
\end{Theorem}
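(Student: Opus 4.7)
The plan is a Strang-type analysis for a perturbed Galerkin scheme. \textbf{Existence and uniqueness} follow immediately from Theorem~\ref{JN4}: since $V_{\boldsymbol h}^{\boldsymbol k}$ is finite dimensional, the discrete inf-sup bound forces the linear map associated to $\A_{\boldsymbol h}$ to be injective, hence bijective, so \eqref{model_problem_galerkin} admits a unique solution $\hat u_{\boldsymbol h}$.

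For the convergence estimate, I would take the interpolant $\hat v_{\boldsymbol h}^I\in V_{\boldsymbol h}^{\boldsymbol k}$ of $\hat u$ supplied by \eqref{int_property_0}. The triangle inequality reduces the task to controlling $\norma{\hat v_{\boldsymbol h}^I - \hat u_{\boldsymbol h}}_V$, as the piece $\norma{\hat u - \hat v_{\boldsymbol h}^I}_V$ is already bounded by \eqref{int_property_0}. Applying Theorem~\ref{JN4} to $\hat u_{\boldsymbol h} - \hat v_{\boldsymbol h}^I$ and using the discrete equation \eqref{model_problem_galerkin} for $\hat u_{\boldsymbol h}$ together with the continuous one \eqref{model_problem_operator} for $\hat u$, I would decompose the resulting numerator as
\[
\A_{\boldsymbol h}(\hat u_{\boldsymbol h} - \hat v_{\boldsymbol h}^I,\hat v_{\boldsymbol h}) = \bigl[\mathcal L_{f,\boldsymbol h} - \mathcal L_f\bigr](\hat v_{\boldsymbol h}) + \A(\hat u - \hat v_{\boldsymbol h}^I,\hat v_{\boldsymbol h}) + \bigl[\A - \A_{\boldsymbol h}\bigr](\hat v_{\boldsymbol h}^I,\hat v_{\boldsymbol h}).
\]
The first summand is controlled directly by \eqref{H3.a}, and the second by the continuity of $\A$ combined with \eqref{int_property_0}.

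The main obstacle is the last variational-crime term. Since $\K$ is discretized exactly, it equals $[\B - \B_{\boldsymbol h}](\hat v_{\boldsymbol h}^I,\hat v_{\boldsymbol h})$, and I would exploit the polynomial consistency \eqref{H2.a} by inserting $\hat q \in P_{k_\circ}(\mathcal T_{h_\circ}) \times X_{h_\partial}^{k_\partial}$ whose boundary component matches that of $\hat v_{\boldsymbol h}^I$ and whose interior component is an elementwise $k_\circ$-degree polynomial approximation of $u$. By \eqref{H2.a} the $\hat q$ contribution cancels, so
\[
\bigl[\B - \B_{\boldsymbol h}\bigr](\hat v_{\boldsymbol h}^I,\hat v_{\boldsymbol h}) = \bigl[\B - \B_{\boldsymbol h}\bigr](\hat v_{\boldsymbol h}^I - \hat q,\hat v_{\boldsymbol h}),
\]
and the continuity of $\B$ together with the broken-space continuity of $\B_{\boldsymbol h}$ (implicit in the framework preceding \eqref{H2.a}, extending \eqref{H2.b}) bound this quantity by the broken $H^1$-norm of $\hat v_{\boldsymbol h}^I - \hat q$ times $\norma{\hat v_{\boldsymbol h}}_V$. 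Splitting $\hat v_{\boldsymbol h}^I - \hat q = (\hat v_{\boldsymbol h}^I - \hat u) + (\hat u - \hat q)$ and applying \eqref{int_property_0} to the first piece and elementwise polynomial approximation to the second yields the rate $h_\circ^{\min(s,k_\circ)}\norma{u}_{H^{s+1}(\Omega)}$. Collecting all three contributions recovers the claimed estimate.
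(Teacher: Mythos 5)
Your proposal is correct and coincides with the argument the paper itself relies on: the paper omits the proof and refers to Theorem 4.8 of \cite{DesiderioFallettaFerrariScuderi2021}, which is precisely this Strang-type analysis — uniqueness from the discrete inf-sup of Theorem~\ref{JN4}, the triangle inequality with the interpolant from \eqref{int_property_0}, the three-term splitting into data error \eqref{H3.a}, continuity of $\A$, and the consistency/variational-crime term handled via \eqref{H2.a} with a piecewise polynomial comparison element and the broken-norm continuity of $\B_{\boldsymbol h}$. The only ingredient left implicit is that $\norma{\lambda}_{H^{s-\nicefrac{1}{2}}(\Gamma)}\apprle\norma{u}_{H^{s+1}(\Omega)}$ (trace of the normal derivative on the smooth $\Gamma$), which is needed to express the interpolation contribution solely in terms of $\norma{u}_{H^{s+1}(\Omega)}$ as in the stated bound; this is standard and does not affect the validity of the argument.
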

Since the proof of Theorem \ref{Theorem:JN1} can be obtained by proceeding analogously as in Theorem 4.8 of \cite{DesiderioFallettaFerrariScuderi2021}, for brevity we omit it here.

In what follows we provide the error estimate in the weaker $W$-norm, where $W := L^2(\Omega) \times H^{-\nicefrac{3}{2}}(\Gamma)$. To this aim, we start by assuming the following property:

\begin{enumerate}[label=({{H}}\text{3}.{{b}}{}), ref={{H}}\text{3}.{{b}}{}] 
	{\setlength\itemindent{10pt} \item\label{H3.b} consistency: for all $\hat q \in P_1(\mathcal{T}_{h_\circ}) \times X_{h_\partial}^{k_\partial}$
		$$\mathcal{L}_{f,\boldsymbol{h}}(\hat q) = \mathcal{L}_f(\hat q).$$} 
\end{enumerate}

Such assumption, together with some of those previously introduced, allows us to prove the following approximation error estimate for the operator $\mathcal{L}_f$.

\begin{Lemma} \label{Lemma:H3}
	Let $\hat v = (v,\mu) \in H^2(\Omega) \times H^{-\nicefrac{1}{2}}(\Gamma)$, and let 
$v_{h_\circ}^I$ be the best approximation of $v$ in  $Q_{h_\circ}^{k_\circ}$. Under assumptions \eqref{H1.a}, \eqref{H3.a} and \eqref{H3.b}, for $s \ge 1$, it holds:
	\begin{equation*}
	\abs*{\mathcal{L}_f((v_{h_\circ}^I,\mu)) - {\mathcal{L}_{f,\boldsymbol{h}}}((v_{h_\circ}^I,\mu))} \apprle h_\circ\left(h_\circ^{\text{min}(s,k_\circ)}+h_\partial^{\text{min}(s,k_\partial)}\right) \norma{v}_{H^{2}(\Omega)} \sigma_s(f).
	\end{equation*}
\end{Lemma}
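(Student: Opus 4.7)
The plan is to upgrade the generic bound \eqref{H3.a} by one extra power of $h_\circ$ using the piecewise-linear consistency \eqref{H3.b}: I subtract from $v_{h_\circ}^I$ a piecewise-$P_1$ approximation of $v$, annihilate its contribution via \eqref{H3.b}, and then apply \eqref{H3.a} to the remainder, which is $O(h_\circ)$-small in $H^1$ by a Strang-type argument.

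More concretely, I would first pick a function $v_1 \in P_1(\mathcal{T}_{h_\circ}) \cap Q_{h_\circ}^{k_\circ}$ (for instance the Lagrange $P_1$ interpolant of $v$ at the vertices of $\mathcal{T}_{h_\circ}$, which sits inside the CVEM space for any $k_\circ \ge 1$). Standard polynomial approximation on star-shaped elements yields $\|v - v_1\|_{H^1(\Omega)} \apprle h_\circ \|v\|_{H^2(\Omega)}$, which combined with \eqref{H1.a} applied for $s=1$ (giving $\|v - v_{h_\circ}^I\|_{H^1(\Omega)} \apprle h_\circ \|v\|_{H^2(\Omega)}$) and the triangle inequality gives
\begin{equation*}
\|v_{h_\circ}^I - v_1\|_{H^1(\Omega)} \apprle h_\circ \|v\|_{H^2(\Omega)}.
\end{equation*}

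Next, observing that both $\mathcal{L}_f$ and $\mathcal{L}_{f,\boldsymbol{h}}$ depend only on the first component of their argument (no load term is associated with the boundary unknown in either the continuous or the discrete formulation), I may replace $\mu$ by $0 \in X_{h_\partial}^{k_\partial}$ and split by linearity
\begin{align*}
\mathcal{L}_f((v_{h_\circ}^I,\mu)) - \mathcal{L}_{f,\boldsymbol{h}}((v_{h_\circ}^I,\mu))
 &= (\mathcal{L}_f - \mathcal{L}_{f,\boldsymbol{h}})((v_1,0)) \\
 &\quad + (\mathcal{L}_f - \mathcal{L}_{f,\boldsymbol{h}})((v_{h_\circ}^I - v_1,0)).
\end{align*}
Since $(v_1,0) \in P_1(\mathcal{T}_{h_\circ}) \times X_{h_\partial}^{k_\partial}$, the first contribution vanishes by \eqref{H3.b}. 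Because $(v_{h_\circ}^I - v_1, 0) \in V_{\boldsymbol{h}}^{\boldsymbol{k}}$, the second is controlled by \eqref{H3.a}, producing a bound of $\bigl(h_\circ^{\min(s,k_\circ)} + h_\partial^{\min(s,k_\partial)}\bigr) \|v_{h_\circ}^I - v_1\|_{H^1(\Omega)}\, \sigma_s(f)$; inserting the preliminary $H^1$ estimate closes the argument.

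The only mildly delicate step is justifying the simultaneous membership $v_1 \in P_1(\mathcal{T}_{h_\circ}) \cap Q_{h_\circ}^{k_\circ}$ together with the $O(h_\circ)$ approximation property in $H^1$. This is a classical feature of VEM spaces, but in the CVEM framework of Section \ref{sec_5_discrete_scheme} it will warrant a brief verification that the standard Lagrangian $P_1$ subspace is indeed available inside $Q_{h_\circ}^{k_\circ}$ for $k_\circ \ge 1$, up to harmless adjustments near the curved portions of $\Gamma_0$ and $\Gamma$.
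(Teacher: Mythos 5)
Your overall strategy is the same as the paper's (subtract a piecewise-$P_1$ function, annihilate it with \eqref{H3.b}, control the remainder with \eqref{H3.a} and an $O(h_\circ)$ estimate in $H^1$), but the step you defer as a ``brief verification'' is precisely where the argument breaks. The membership $v_1 \in P_1(\mathcal{T}_{h_\circ}) \cap Q_{h_\circ}^{k_\circ}$ cannot be arranged in this setting, for two reasons. First, the meshes $\mathcal{T}_{h_\circ}$ consist of general polygons (possibly with hanging nodes and more than three vertices), so there is in general no elementwise \emph{linear} function interpolating $v$ at all the vertices of an element: the Lagrange $P_1$ vertex interpolant you invoke only exists on triangulations. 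Second, and more fundamentally, the curved virtual spaces of Section \ref{sec_5_discrete_scheme} (in the spirit of Be\~irao da Veiga--Russo--Vacca, deliberately chosen over the polynomial-preserving variant) do \emph{not} contain linear polynomials on elements with a curved edge: the trace of an affine function on the curved edge $e_1$ is required to lie in $\widetilde{P}_{k_\circ}(e_1)$, i.e.\ to be a polynomial in the curve parameter, which fails already for $q(\mathbf{x})=x_1$ on a circular arc. So your $v_1$ cannot simultaneously satisfy the hypotheses of \eqref{H3.b} (which needs $v_1 \in P_1(\mathcal{T}_{h_\circ})$) and of \eqref{H3.a} as you state it (which needs $v_{h_\circ}^I - v_1 \in Q_{h_\circ}^{k_\circ}$), and ``harmless adjustments near the curved portions'' would destroy one of the two memberships.

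The paper avoids the conformity requirement altogether: it takes $v_1 = \Pi_1^{\nabla} v$, the \emph{broken} elementwise $H^1$-projection, which lies in $P_1(\mathcal{T}_{h_\circ})$ (so \eqref{H3.b} applies) but is discontinuous and not in $Q_{h_\circ}^{k_\circ}$; the bound \eqref{H3.a} is then used in the broken sense, with $\mathcal{L}_{f,\boldsymbol{h}}$ understood elementwise and the broken norm $\|v_{h_\circ}^I - \Pi_1^{\nabla} v\|_{H^1(\mathcal{T}_{h_\circ})}$ on the right-hand side, which is estimated by $h_\circ \|v\|_{H^2(\Omega)}$ exactly as in your triangle-inequality step (this is \eqref{questa_serve}, combining \eqref{H1.a} with standard polynomial approximation on star-shaped elements). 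If you replace your conforming $v_1$ by $\Pi_1^{\nabla} v$ and accept this broken-space reading of \eqref{H3.a} (natural here, since $\mathcal{L}_{f,\boldsymbol{h}}$ is assembled from local $L^2$-projections and depends only on the domain component), the rest of your argument, including the cancellation of the $\mu$-component by linearity, goes through and coincides with the paper's proof.
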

\begin{proof}
	Using \eqref{H3.b} and \eqref{H3.a}, we can write
	\begin{align*}
&	\vert \mathcal{L}_f((v_{h_\circ}^I,\mu)) - {\mathcal{L}_{f,\boldsymbol{h}}}((v_{h_\circ}^I,\mu))\vert
	 \\ \hspace{0.9cm} & = 		
	\vert\mathcal{L}_f((v_{h_\circ}^I,\mu)) - 
	{\mathcal{L}_{f,\boldsymbol{h}}}((v_{h_\circ}^I,\mu)) - \mathcal{L}_f((\Pi_{1}^{\nabla}v,\mu)) + \mathcal{L}_{f,\boldsymbol{h}}((\Pi_{1}^{\nabla}v,\mu))\vert
		\\ \hspace{0.9cm} & = \vert \mathcal{L}_f((v_{h_\circ}^I - \Pi_{1}^{\nabla} v,0)) -  {\mathcal{L}_{f,\boldsymbol{h}}}((v_{h_\circ}^I - \Pi_{1}^{\nabla} v,0)) \vert\\
		\hspace{0.9cm} &\apprle \left(h_\circ^{\text{min}(s,k_\circ)}+h_\partial^{\text{min}(s,k_\partial)}\right) \| v_{h_\circ}^I - \Pi^{\nabla}_{1} v \|_{H^1(\mathcal{T}_{h_\circ})}\, \sigma_s(f).
	\end{align*}
	From \eqref{H1.a} and by using standard polynomial approximation estimates (see, for example, Lemma 4.3.8 in \cite{BrennerScott2008}), we get
	\begin{align}
	\|\Pi_{1}^{\nabla}v-v_{h_\circ}^{I}\|_{H^{1}(\mathcal{T}_{h_\circ})}\leq \|\Pi_{1}^{\nabla}v-v\|_{H^{1}(\mathcal{T}_{h_\circ})} + \|v-v_{h_\circ}^{I}\|_{H^{1}(\Omega)} \apprle h_\circ\left\|v\right\|_{H^{2}(\Omega)},\label{questa_serve}
	\end{align}
	which, combined with the previous estimate, leads to the claim.
\end{proof}
To prove the error estimate in the weaker $W$-norm, we introduce the dual space $W' := L^2(\Omega) \times H^{\nicefrac{3}{2}}(\Gamma)$ and denote by $ [\cdot,\cdot]: W \times W' \to \mathbf{R}$ the associated duality pairing. Further, we define the adjoint operator $\mathcal{A}^{*}: V\rightarrow V^{'}$ as
\begin{equation*}
\left(\mathcal{A}^{*} \hat{v}\right)(\hat{u}) := \mathcal{A}(\hat{u},\hat{v}),
\end{equation*}
which, by Lemma 3 in \cite{JohnsonNedelec1980}, is an isomorphism, whose inverse $\mathcal{A}^{*-1}: H^1(\Omega) \times H^{\nicefrac{3}{2}}(\Gamma) \rightarrow H^2(\Omega) \times H^{\nicefrac{1}{2}}(\Gamma)$ is continuous.
\begin{Theorem} 
	\label{Theorem:JN2}
	Assume that there exist $k_\circ$ and $k_\partial$ and $\sigma_s:L^2(\Omega) \to \mathbf{R}^+$ such that, for all $s \ge 1$, \eqref{H1.a},\eqref{H1.b}, \eqref{H2.a}--\eqref{H2.c}, \eqref{H3.a} and \eqref{H3.b} hold. Then, for $h_\circ$ and $h_\partial$ small enough, if $\hat u_{\boldsymbol{h}} \in V_{\boldsymbol{h}}^{\boldsymbol{k}}$ is the solution of Problem \eqref{model_problem_galerkin} and $\hat u$, solution of Problem \eqref{model_problem_operator}, satisfies $\hat u \in H^{s+1}(\Omega) \times H^{s-\nicefrac{1}{2}}(\Gamma)$, for $s \ge 1$, the following estimate
	\begin{equation*}
	\norma{\hat u - \hat u_{\boldsymbol{h}}}_W \apprle (h_\circ+h_\partial) \left(h_\circ^{\text{min}(s,k_\circ)} + h_\partial^{\text{min}(s,k_\partial)}\right) \left( \norma{u}_{H^{s+1}(\Omega)}  + \sigma_s(f)\right)
	\end{equation*}
	holds.
\end{Theorem}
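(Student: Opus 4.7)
The plan is a duality (Aubin--Nitsche) argument closely following Theorem 4.9 of \cite{DesiderioFallettaFerrariScuderi2021}, adapted to the decoupled-order framework. I would start from
\[
\norma{\hat u - \hat u_{\boldsymbol{h}}}_W = \sup_{\hat g \in W' \setminus \{0\}} \frac{[\hat u - \hat u_{\boldsymbol{h}},\,\hat g]}{\norma{\hat g}_{W'}},
\]
fix $\hat g \in W'$, and introduce the adjoint solution $\hat\phi=(\phi,\psi)\in V$ defined by $\A(\hat v,\hat \phi)=[\hat v,\hat g]$ for all $\hat v\in V$. The regularity shift for $\A^{*-1}$ recalled just before the theorem, extended by standard elliptic regularity to cover $L^2$ (rather than $H^1$) bulk data, yields $\hat\phi \in H^2(\Omega)\times H^{\nicefrac{1}{2}}(\Gamma)$ with $\norma{\hat\phi}_{H^2\times H^{\nicefrac{1}{2}}}\apprle\norma{\hat g}_{W'}$; then \eqref{int_property_0} applied with $s=1$ supplies $\hat\phi_{\boldsymbol{h}}^I=(\phi_{h_\circ}^I,\psi_{h_\partial}^I)\in V_{\boldsymbol{h}}^{\boldsymbol{k}}$ with $\norma{\hat\phi-\hat\phi_{\boldsymbol{h}}^I}_V \apprle (h_\circ + h_\partial)\norma{\hat g}_{W'}$.

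Writing $\hat e := \hat u - \hat u_{\boldsymbol{h}}$, so that $[\hat e,\hat g]=\A(\hat e,\hat\phi)$, I would insert and subtract $\A(\hat u_{\boldsymbol{h}},\hat\phi_{\boldsymbol{h}}^I)$ and use that $\hat u$ solves \eqref{model_problem_operator} while $\hat u_{\boldsymbol{h}}$ solves \eqref{model_problem_galerkin} to obtain the three-term split
\begin{align*}
[\hat e,\hat g] & = \A(\hat e,\,\hat\phi-\hat\phi_{\boldsymbol{h}}^I) + \bigl(\mathcal{L}_f(\hat\phi_{\boldsymbol{h}}^I)-\mathcal{L}_{f,\boldsymbol{h}}(\hat\phi_{\boldsymbol{h}}^I)\bigr) \\ & \quad + \bigl(\A_{\boldsymbol{h}}(\hat u_{\boldsymbol{h}},\hat\phi_{\boldsymbol{h}}^I)-\A(\hat u_{\boldsymbol{h}},\hat\phi_{\boldsymbol{h}}^I)\bigr).
\end{align*}
The first term is controlled by the continuity of $\A$, Theorem \ref{Theorem:JN1}, and the estimate on $\norma{\hat\phi-\hat\phi_{\boldsymbol{h}}^I}_V$, producing the target factor $(h_\circ+h_\partial)\bigl(h_\circ^{\min(s,k_\circ)}+h_\partial^{\min(s,k_\partial)}\bigr)\bigl(\norma{u}_{H^{s+1}(\Omega)}+\sigma_s(f)\bigr)\norma{\hat g}_{W'}$. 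The second term is handled directly by Lemma \ref{Lemma:H3} applied to $\phi$, combined with $\norma{\phi}_{H^2(\Omega)}\apprle\norma{\hat g}_{W'}$; this is actually sharper than needed ($h_\circ$ in place of $h_\circ+h_\partial$).

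The main obstacle is the third bracket, i.e.~the CVEM consistency error $(\B_{\boldsymbol{h}}-\B)(\hat u_{\boldsymbol{h}},\hat\phi_{\boldsymbol{h}}^I)$ (note $\A-\A_{\boldsymbol{h}}=\B-\B_{\boldsymbol{h}}$ since $\K$ appears unchanged in both). The strategy is the standard VEM consistency manipulation, applying \eqref{H2.a} element-by-element: on each $E$ the projections $\Pi_{k_\circ}^{\nabla,E} u_{\boldsymbol{h}}|_E, \Pi_{k_\circ}^{\nabla,E}\phi_{h_\circ}^I|_E\in P_{k_\circ}(E)$ while the remainders $u_{\boldsymbol{h}}-\Pi_{k_\circ}^{\nabla}u_{\boldsymbol{h}}$ and $\phi_{h_\circ}^I-\Pi_{k_\circ}^{\nabla}\phi_{h_\circ}^I$ remain locally in $V_{\boldsymbol{h}}^{\boldsymbol{k}}|_E$, so the BEM parts of $\B$ and $\B_{\boldsymbol{h}}$ cancel and the bulk consistency error collapses to the symmetric local form evaluated on the two projection errors. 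Continuity of both the discrete and continuous forms on the broken space $H^1(\mathcal{T}_{h_\circ})$ then bounds this by
\[
\norma{u_{\boldsymbol{h}}-\Pi_{k_\circ}^{\nabla} u_{\boldsymbol{h}}}_{H^1(\mathcal{T}_{h_\circ})} \, \norma{\phi_{h_\circ}^I-\Pi_{k_\circ}^{\nabla}\phi_{h_\circ}^I}_{H^1(\mathcal{T}_{h_\circ})}.
\]
For the first factor, inserting $u$ and combining \eqref{questa_serve} with Theorem \ref{Theorem:JN1} gives the rate $h_\circ^{\min(s,k_\circ)}+h_\partial^{\min(s,k_\partial)}$ times $\norma{u}_{H^{s+1}(\Omega)}+\sigma_s(f)$; for the second, inserting $\phi$ and again invoking \eqref{questa_serve} yields the crucial extra $h_\circ\norma{\phi}_{H^2(\Omega)}\apprle h_\circ\norma{\hat g}_{W'}$. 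Summing the three contributions and dividing by $\norma{\hat g}_{W'}$ closes the argument. The delicate points are the element-wise derivation of the symmetric consistency identity (and the associated broken-space continuity) and the extension of the adjoint regularity shift to $L^2$ bulk data in $W'$.
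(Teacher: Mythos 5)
Your proposal is correct and follows essentially the same route as the paper: the same duality argument with the adjoint solution, the same three-term splitting (continuity of $\mathcal{A}$ times the interpolation error, the quadrature error via Lemma \ref{Lemma:H3}, and the CVEM consistency term handled through \eqref{H2.a} and the broken $H^1$ continuity of $\mathcal{B}_{\boldsymbol{h}}$ and $\mathcal{B}$), yielding the extra factor $h_\circ$ from \eqref{questa_serve}. The only deviation is cosmetic: you subtract the $\Pi^{\nabla}_{k_\circ}$-projections of the discrete functions rather than of $u$ and $v$ as the paper does, which leads to the same rates via the same triangle-inequality and polynomial-approximation estimates.
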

 \begin{proof}
Let $\hat w \in W'$ 
and $\hat v :=\A^{*-1} \hat w$ the unique element that, according to the above mentioned property of $\A^{*-1}$, satisfies $\hat v\in H^2(\Omega) \times H^{\nicefrac{1}{2}}(\Gamma) \subset V$,
	\begin{equation} \label{eq:weaker1}
	\A(\hat z,\hat v) =\A(\A^{*-1}\hat w, \hat z) = [\hat z, \hat w] \quad \forall \hat z \in V
	\end{equation}
	and
$	\norma{\hat v}_{H^2(\Omega) \times H^{\nicefrac{1}{2}}(\Gamma)} \apprle \norma{\hat w}_{W'}$.

Now, choosing $\hat z = \hat u - \hat u_{\boldsymbol{h}}$ in \eqref{eq:weaker1}, where $\hat u$ and $\hat u_{\boldsymbol{h}}$ are the solutions of \eqref{model_problem_operator} and \eqref{model_problem_galerkin} respectively, and denoting by $\hat v_{\boldsymbol{h}}^I = (v^I_{h_\circ},\mu^I_{h_\partial}) \in V_{\boldsymbol{h}}^{\boldsymbol{k}}$ the interpolant of $\hat v$, we obtain
	\begin{align} \nonumber
	\vert\, [\hat u &- \hat u_{\boldsymbol{h}}, \hat w] \, \vert  =\abs*{\A(\hat u - \hat u_{\boldsymbol{h}}, \hat v)  } \\ \nonumber & =\abs*{\A(\hat u - \hat u_{\boldsymbol{h}}, \hat v) + \A_{\boldsymbol{h}}(\hat u_{\boldsymbol{h}}, \hat v^I_{\boldsymbol{h}}) - \A_{\boldsymbol{h}}(\hat u_{\boldsymbol{h}}, \hat v_{\boldsymbol{h}}^I) +\A(\hat u- \hat u_{\boldsymbol{h}}, \hat v_{\boldsymbol{h}}^I) -\A(\hat u- \hat u_{\boldsymbol{h}}, \hat v_{\boldsymbol{h}}^I) }\\  \nonumber
	& \leq \abs*{\A(\hat u - \hat u_{\boldsymbol{h}}, \hat v - \hat v_{\boldsymbol{h}}^I) }+\abs*{\mathcal{L}_f(\hat v_{\boldsymbol{h}}^I) - \mathcal{L}_{f,\boldsymbol{h}}(\hat v_{\boldsymbol{h}}^I)} + \abs*{\A_{\boldsymbol{h}}(\hat u_{\boldsymbol{h}}, \hat v_{\boldsymbol{h}}^I) -\A(\hat u_{\boldsymbol{h}},\hat v_{\boldsymbol{h}}^I)} \\ \nonumber
	& \apprle \norma{\hat u - \hat u_{\boldsymbol{h}}}_V \norma{\hat v - \hat v_{\boldsymbol{h}}^I}_V + h_\circ\left(h_\circ^{\text{min}(s,k_\circ)}+h_\partial^{\text{min}(s,k_\partial)}\right)  \norma{v}_{H^2(\Omega)}  \sigma_s(f) \\ & +  \abs*{\B_{\boldsymbol{h}}(\hat u_{\boldsymbol{h}}, \hat v_{\boldsymbol{h}}^I) -\B(\hat u_{\boldsymbol{h}},\hat v_{\boldsymbol{h}}^I)} =: I + II + III, \label{eq:bacca}
	\end{align}
	the last inequality following from the continuity of $\A$ and Lemma \ref{Lemma:H3}.
	By applying Theorem \ref{Theorem:JN1} and the interpolation property \eqref{int_property_0}, we estimate 
	\begin{align}\label{eq:I}
	\nonumber
	I &\apprle (h_\circ +h_\partial)\left(h_\circ^{\text{min}(s,k_\circ)} + h_\partial^{\text{min}(s,k_\partial)}\right) \left(\norma{u}_{H^{s+1}(\Omega)}  + \sigma_s(f)\right) \|\hat v\|_{H^2(\Omega) \times H^{\nicefrac{1}{2}}(\Gamma)} \\
	&\apprle (h_\circ +h_\partial)\left(h_\circ^{\text{min}(s,k_\circ)} + h_\partial^{\text{min}(s,k_\partial)}\right) \left(\norma{u}_{H^{s+1}(\Omega)}  + \sigma_s(f)\right) \|\hat w\|_{W'},
	\end{align}
	and, since $\norma{v}_{H^2(\Omega)}\le \|\hat v\|_{H^2(\Omega) \times H^{\nicefrac{1}{2}}(\Gamma)}$ we have
	\begin{equation}\label{eq:II}
	II \apprle h_\circ\left(h_\circ^{\text{min}(s,k_\circ)} + h_\partial^{\text{min}(s,k_\partial)}\right) \|\hat w\|_{W'} \sigma_s(f).
	\end{equation}
To estimate $III$ in \eqref{eq:bacca}, we add and subtract the terms $ \B_{\boldsymbol{h}}((\Pi^{\nabla}_{k_\circ}u,\lambda),\hat v_{\boldsymbol{h}}^I)$ and $\B((\Pi^{\nabla}_{k_\circ} u,\lambda),\hat v_{\boldsymbol{h}}^I)$ which, for \eqref{H2.a}, are equal.
Hence we get
	\begin{align*}
	III & = \vert
	\B_{\boldsymbol{h}}((u_{h_\circ} - \Pi^{\nabla}_{k_\circ} u,0),\hat v_{\boldsymbol{h}}^I) - \B((u_{h_\circ} - \Pi^{\nabla}_{k_\circ} u,0),\hat v_{\boldsymbol{h}}^I)\vert.
	\end{align*}
	Similarly, adding and subtracting the two equal terms (see \eqref{H3.b}) \\ $ \B_{\boldsymbol{h}}((u_{h_\circ} - \Pi_{k_\circ}^{\nabla} u,0),(\Pi^{\nabla}_{k_\circ} v, \mu_{h_\partial}^I))$ and $\B((u_{h_\circ} - \Pi_{k_\circ}^{\nabla} u,0),(\Pi^{\nabla}_{k_\circ} v, \mu_{h_\partial}^I))$, we obtain
	\begin{align*}
	III \le 
	\vert \B_{\boldsymbol{h}}((u_{h_\circ} - \Pi^{\nabla}_{k_\circ} u,0),(v_{h_\circ}^I - \Pi^{\nabla}_1 v,0))\vert + \vert \B((u_{h_\circ} - \Pi^{\nabla}_{k_\circ} u,0), (v_{h_\circ}^I - \Pi^{\nabla}_1 v,0)) \vert.
	\end{align*}
	Using the continuity of $\B_{\boldsymbol{h}}$ (see \eqref{H2.b}) and of $\B$, we have
	\begin{equation*}
	III \apprle \norma{u_{h_\circ} - \Pi^{\nabla}_{k_\circ} u}_{H^1(\mathcal{T}_{\boldsymbol{h}})} \norma{ v_{h_\circ}^I - \Pi^{\nabla}_1 v}_{H^1(\mathcal{T}_{\boldsymbol{h}})}.
	\end{equation*}
	The first factor of the above inequality is estimated, by using Theorem \ref{Theorem:JN1} and standard polynomial approximations, as follows
	\begin{align*}
	\|u_{h_\circ} - \Pi^{\nabla}_{k_\circ} u\|_{H^1(\mathcal{T}_{\boldsymbol{h}})} & \le \| u_{h_\circ} - u\|_{H^1(\Omega)} + \norma{u - \Pi^{\nabla}_{k_\circ} u}_{H^1(\mathcal{T}_{\boldsymbol{h}})}\\
	& \apprle \left(h_\circ^{\text{min}(s,k_\circ)} + h_\partial^{\text{min}(s,k_\partial)}\right) (\norma{u}_{H^{s+1}(\Omega)}  + \sigma_s(f)).
	\end{align*} 
	Then, using 
	 \eqref{questa_serve}, we obtain
	\begin{align} \label{eq:bacca1}
	\nonumber
	III &\apprle h_\circ\left(h_\circ^{\text{min}(s,k_\circ)} + h_\partial^{\text{min}(s,k_\partial)}\right) \left(\norma{u}_{H^{s+1}(\Omega)}  + \sigma_s(f)\right) \|v\|_{H^2(\Omega)}\\
	&\apprle h_\circ\left(h_\circ^{\text{min}(s,k_\circ)} + h_\partial^{\text{min}(s,k_\partial)}\right) \left(\norma{u}_{H^{s+1}(\Omega)}  + \sigma_s(f)\right) \|\hat w\|_{W'}.
	\end{align}
	Finally, the assertion easily follows combining \eqref{eq:bacca} with \eqref{eq:I}, \eqref{eq:II} and \eqref{eq:bacca1}.
\end{proof}

\section{The CVEM-BEM method}\label{sec_5_discrete_scheme}

In this section we describe the discrete CVEM-BEM coupling procedure for the solution of Problem (\ref{model_problem_operator}). 
In particular, we will show that all the assumptions, introduced in Section \ref{sec_4_galerkin_pb} and used to prove Theorems \ref{Theorem:JN1} and \ref{Theorem:JN2}, are satisfied.
Referring to the notation introduced in Section \ref{sec_4_galerkin_pb}, and denoting $P_{-1}(E) = \{0\}$, 
we consider for each $E\in\mathcal{T}_{h_\circ}$ the following local virtual space $Q^{k_\circ}_{h_\circ}(E)$ defined by
\begin{align*}
Q^{k_\circ}_{h_\circ}(E):=&\left\{ v_{h_\circ}\in H^{1}(E): \Delta v_{h_\circ}\in P_{k_\circ-2}(E),\right. \\
&\left. \hskip.4cm v_{h_\circ}\,\raisebox{-.5em}{$\vert_{e_{1}}$}\in\widetilde{P}_{k_\circ}(e_{1}), v_{h_\circ}\,\raisebox{-.5em}{$\vert_{e_{i}}$}\in P_{k_\circ}(e_{i}), \ i=2,\ldots,n_{\text{\tiny{E}}}\right\},
\end{align*}
where
$e_{1},\ldots,e_{n_{\text{\tiny{E}}}}$ denote the edges of the  boundary of $E$, whose first element $e_{1}$ is assumed to be curved and parametrized by a local map $\gamma_{E}: I_E \rightarrow e_1$,  and $\widetilde{P}_{k_\circ}(e_{1}):=\left\{\widetilde{q}=q\circ\gamma_{E}^{-1} \ : \ q\in P_{k_\circ}(I_E)\right\}$. 
\

We omit here, for brevity, the complete description of such space and we refer to \cite{BeiraoBrezziCangianiManziniMariniRusso2013, BeiraoRussoVacca2019} for a deeper presentation. Further, since we will use some of the theoretical results proved in \cite{DesiderioFallettaFerrariScuderi2021}, we also refer to this latter, in particular for what concerns the notation and the implementation details.

On the basis of the definition of the local virtual space $Q_{h_\circ}^{k_\circ}(E)$, we construct the global virtual space
$$
Q_{h_\circ}^{k_\circ} := \{ v_{h_\circ} \in H^1_{0,\Gamma_0} : v_{{h_\circ}_{\vert_E}} \in Q_{h_\circ}^{k_\circ}(E), \ E \in \mathcal{T}_{h_\circ} \},
$$
The validity of Assumption \eqref{H1.a} is based on the proof of Lemma 5.2 in \cite{DesiderioFallettaFerrariScuderi2021}, in which the results hold both for the space $Q^{k_\circ}_{h_\circ}$ and for a suitable \emph{enhanced} space associated to it.
 For each $E \in \mathcal{T}_{h_\circ}$, in the spirit of the virtual element method, we define the approximation $a^{\text{\tiny{E}}}_{h_\circ}$ of the bilinear form $a^{\text{\tiny{E}}}$ (see definition \eqref{ae}), as follows:
\begin{align*}
\label{discrete_local_bilinear_form_a} 
a^{\text{\tiny{E}}}_{h_\circ}(u_{h_\circ},v_{h_\circ})&:=a^{\text{\tiny{E}}}\left(\Pi_{k_\circ}^{\nabla,E}u_{h_\circ},\Pi_{k_\circ}^{\nabla,E}v_{h_\circ}\right)+s^{\text{\tiny{E}}}\left(\left(I-\Pi_{k_\circ}^{\nabla,E}\right)u_{h_\circ},\left(I-\Pi_{k_\circ}^{\nabla,E}\right)v_{h_\circ}\right)
\end{align*}
where $s^E$ is the standard ``\textit{dofi-dofi}'' stabilization term  (see Eq. (3.22) of \cite{BeiraoBrezziMariniRusso2014}).
\noindent
The global approximate bilinear form $a_{h_\circ}:Q^{k_\circ}_{h_\circ}\times Q^{k_\circ}_{h_\circ}\rightarrow\mathbf{R}$ is then defined by  summing up the local contributions
\begin{equation*}\label{global_discrete_bilinear_form}
a_{h_\circ}(u_{h_\circ},v_{h_\circ}):=\sum\limits_{E\in\mathcal{T}_{\boldsymbol{h}}}a^{\text{\tiny{E}}}_{h_\circ}(u_{h_\circ},v_{h_\circ}).
\end{equation*}
The boundary element space $X_{h_\partial}^{k_\partial}$ associated to the artificial boundary $\Gamma$ is defined as follows:
\begin{equation*}
X_{h_\partial}^{k_\partial}:=\left\{\lambda \in L^{2}(\Gamma) \ : \lambda_{\mkern 1mu \vrule height 2ex\mkern2mu e}\in\widetilde{P}_{k_\partial-1}(e), \  e\in\mathcal{T}_{h_\partial}^{\Gamma}\right\} \cap \,  H_0^{-\nicefrac{1}{2}}(\Gamma),
\end{equation*}
and we refer to \cite{LeRoux1977} for the validity of the associated Assumption \eqref{H1.b}.
We then define the approximate bilinear form $\mathcal{B}_{\boldsymbol{h}}:V^{\boldsymbol{k}}_{\boldsymbol{h}}\times V^{\boldsymbol{k}}_{\boldsymbol{h}}\rightarrow\mathbf{R}$ as:
\begin{equation*} \label{eq:modifiedB}
\mathcal{B}_{\boldsymbol{h}}(\hat{u}_{\boldsymbol{h}},\hat{v}_{\boldsymbol{h}}):= a_{h_\circ}(u_{h_\circ},v_{h_\circ}) - \langle\lambda_{h_\partial},v_{h_\circ}\rangle_{\Gamma} + \langle \mu_{h_\partial},u_{h_\circ}\rangle_{\Gamma} + 2\langle\mu_{h_\partial},\text{V} \lambda_{h_\partial}\rangle_{\Gamma}
\end{equation*}
for $\hat u_{\boldsymbol{h}} = (u_{h_\circ}, \lambda_{h_\partial}), \hat v_{\boldsymbol{h}} = (v_{h_\circ},\mu_{h_\partial})\in V^{\boldsymbol{k}}_{\boldsymbol{h}}$.

For these choices, from \cite{DesiderioFallettaFerrariScuderi2021} (see Section 5.2), Assumptions \eqref{H2.a}-\eqref{H2.c} are satisfied .

By approximating the linear operator $\mathcal{L}_f$ in a standard VEM way, in particular as in \cite{BrennerGuanSung2017} (see Eq. (3.18)), and assuming $f \in H^{s-1}(\Omega)$, from Lemma 3.4 in \cite{BrennerGuanSung2017}, Assumption (\ref{H3.a}) follows with $\sigma_s(f) = \vert f\vert_{H^{s-1}(\Omega)}$. Finally, Assumption \eqref{H3.b} is trivially satisfied. 

\section{Algebraic details and computational issues}\label{sec_6_impl_det}

In this section we briefly detail the construction of the final linear system associated with the CVEM-BEM scheme, and we give some implementation issues concerning the BEM matrices. 

We start by re-ordering and splitting the complete index set $\Stot$  of the basis functions $\left\{\Phi_{j}\right\}_{j\in\Stot}$ of $Q^{k_\circ}_{h_\circ}$ as
$\Stot=\Sint\cup\SB$,
where $\Sint$ and $\SB$ denote the sets of indices related to the internal degrees of freedom and to those lying on $\Gamma$, respectively. Moreover, we denote by  $\left\{\phi_{j}\right\}_{j\in\mathcal{G}}$ the basis functions of $X^{k_\partial}_{h_\partial}$, $\mathcal{G}$ being the corresponding index set.
In order to write the linear system associated with the discrete problem \eqref{model_problem_galerkin}, we expand the unknown function $\hat u_{\boldsymbol{h}} = (u_{h_\circ}, \lambda_{h_\partial}) \in Q_{h_\circ}^{k_\circ} \times X^{k_\partial}_{h_\partial}$  as
\begin{equation}\label{interpolants}
\begin{aligned}
&u_{h_\circ}(\mathbf{x})=:\sum\limits_{j\in\Stot}u_{h_\circ}^{j}\Phi_{j}(\mathbf{x}) \quad \text{with} \quad u_{h_\circ}^{j}=\text{dof}_{j}(u_{h_\circ}),\\
&\lambda_{h_\partial}(\mathbf{x})=:\sum\limits_{j\in\mathcal{G}}\lambda_{h_\partial}^{j}\varphi_j(\mathbf{x}) \quad \text{with} \quad \lambda_{h_\partial}^{j}=\text{dof}_{j}(\lambda_{h_\partial}).
\end{aligned}
\end{equation}
Hence, using the basis functions of $Q^{k_\circ}_{h_\circ}$ to test the discrete counterpart of equation \eqref{model_problem_variational_1}, we get for $i\in\Stot$
\begin{align}\label{VEM_discrete_equation}
\sum\limits_{j\in\Stot}u_{h_\circ}^{j}&\sum\limits_{E\in\mathcal{T}_{h_\circ}}a^{\text{\tiny{E}}}_{h_\circ}(\Phi_{j},\Phi_{i})-\sum\limits_{j\in\mathcal{G}}\lambda_{h_\partial}^{j}\langle\varphi_{j},\Phi_{i}\rangle_{\Gamma} = \mathcal{L}_{f,\boldsymbol{h}}((\Phi_i,0)).
\end{align}
To write the matrix form of the above linear system, we introduce the stiffness matrix $\mathbb{A}$ and the matrix $\mathbb{Q}$ whose entries are respectively defined by
\begin{displaymath}
\mathbb{A}_{ij} :=  \sum\limits_{E\in\mathcal{T}_{h_\circ}}a^{\text{\tiny{E}}}_{h_\circ}(\Phi_{j},\Phi_{i}), \qquad 
\mathbb{Q}_{ij} :=  \langle\varphi_{j},\Phi_{i}\rangle_{\Gamma}
\end{displaymath}
and the column vectors $\mathbf{u} = \left[u_{h_\circ}^{j}\right]_{j\in\Stot}$, $\boldsymbol{\lambda} = \left[\lambda_{h_\partial}^{j}\right]_{j\in\mathcal{G}}$ and $\mathbf{f} = \left[\mathcal{L}_{f,\boldsymbol{h}}((\Phi_i,0))\right]_{i\in\Stot}$.
In accordance with the splitting of the set of the degrees of freedom, we consider the block partitioned representation of the above matrices and vectors (with obvious meaning of the notation), and we rewrite equations \eqref{VEM_discrete_equation} as follows:
\begin{eqnarray}\label{VEM_system}
\left[
\begin{array}{ll}
\mathbb{A}^{I I}& \mathbb{A}^{I \Gamma}\\
&\\
\mathbb{A}^{\Gamma I} & \mathbb{A}^{\Gamma \Gamma} \\
\end{array}\right]\left[\begin{array}{l}
\mathbf{u}^I\\
\\
\mathbf{u}^\Gamma
\end{array}
\right]
-	\left[
\begin{array}{l}
\mathbf{0} \\
\\
\mathbb{Q} \boldsymbol{\lambda}
\end{array}
\right]
= 	\left[
\begin{array}{l}
\mathbf{f}^{I}\\
\\
\mathbf{f}^\Gamma
\end{array}\right].
\end{eqnarray}
For what concerns the discretization of the BI-NRBC, by inserting \eqref{interpolants} in \eqref{model_problem_variational_2} and testing with the functions $\varphi_i$, $i \in\mathcal{G}$, we obtain
\begin{equation}\label{boundary_integral_equation_discretized}
\begin{aligned}
\sum\limits_{j\in\SB}&u_{h_\circ}^{j}\left[\frac{1}{2}\int\limits_{\Gamma}\Phi_{j}(\mathbf{x})\varphi_{i}(\mathbf{x})\dd\Gamma_\mathbf{x}-\int\limits_{\Gamma}\left(\int\limits_{\Gamma}\frac{\partial G}{\partial\mathbf{n}_{\mathbf{y}}}(\mathbf{x},\mathbf{y})\Phi_{j}(\mathbf{y})\,\dd\Gamma_{\mathbf{y}}\right)\varphi_{i}(\mathbf{x})\dd\Gamma_{\mathbf{x}}\right]\\
&\,\,+\sum\limits_{j\in\mathcal{G}}\lambda_{h_\partial}^{j}\int\limits_{\Gamma}\left(\int\limits_{\Gamma}G(\mathbf{x},\mathbf{y})\varphi_{j}(\mathbf{y})\,\dd\Gamma_{\mathbf{y}}\right)\varphi_{i}(\mathbf{x})\dd\Gamma_{\mathbf{x}}=0, 
\end{aligned}
\end{equation}
that in matrix form reads
\begin{equation}\label{BEM_system}
\left(\frac{1}{2}\mathbb{Q}^T-\mathbb{K}\right)\mathbf{u}^{\Gamma}+\mathbb{V}\boldsymbol{\lambda}=\mathbf{0},
\end{equation}
where
\begin{align*}
\mathbb{V}_{ij}&:=\int\limits_{\Gamma}\left(\int\limits_{\Gamma}G(\mathbf{x},\mathbf{y})\phi_{j}(\mathbf{y})\dd\Gamma_{\mathbf{y}} \right) \phi_{i}(\mathbf{x})\dd\Gamma_{\mathbf{x}},\\
\mathbb{K}_{ij}&:=\int\limits_{\Gamma}\left(\int\limits_{\Gamma}\frac{\partial G}{\partial\mathbf{n}_{\mathbf{y}}}(\mathbf{x},\mathbf{y})\Phi_{j}(\mathbf{y})\dd\Gamma_{\mathbf{y}}\right)\phi_{i}(\mathbf{x})\dd\Gamma_{\mathbf{x}}.
\end{align*}
By combining \eqref{VEM_system} with \eqref{BEM_system} we obtain the final linear system
\begin{eqnarray}\label{final_system}
\left[
\begin{array}{ccc}
\mathbb{A}^{I I}& \mathbb{A}^{I \Gamma} & \mathbb{O}\\

&\\
\mathbb{A}^{\Gamma I} & \mathbb{A}^{\Gamma \Gamma} & -\mathbb{Q}\\
&\\
\mathbb{O}   & \frac{1}{2}\mathbb{Q}^T-\mathbb{K}& \mathbb{V}
\end{array}\right]\left[\begin{array}{l}
\mathbf{u}^I\\
\\
\mathbf{u}^\Gamma\\
\\
\boldsymbol{\lambda}
\end{array}
\right]
= 	\left[
\begin{array}{c}
\mathbf{f}^{I}\\
\\
\mathbf{f}^\Gamma\\
\\
\mathbf{0}
\end{array}\right].
\end{eqnarray}
It is worth to point out that, since in the theoretical analysis we have assumed that the boundary integral operators are not approximated, it is crucial to compute the integrals defining the BEM entries of $\mathbb{V}$ and $\mathbb{K}$ with a high accuracy. Hence, to retrieve their approximation without affecting the overall accuracy of the coupled CVEM-BEM scheme, suitable efficient quadrature formulas must be considered. In \cite{DesiderioFallettaScuderi2021} we have proposed and successfully applied a smoothing technique to weaken the $\log$- singularity of the single layer operator and to compute the corresponding entries with high accuracy by using the Gauss-Legendre product quadrature rule with few nodes. Such a strategy has been tuned for the standard nodal linear and quadratic basis functions and could be, in principle, adopted also in this context for the basis functions satisfying property \eqref{eq:identity}. However, since the strategy associated to the standard Lagrangian basis is a well-established task, we take advantage of it by applying a computational trick. To describe this latter, we introduce the space 
$$
\widehat X_{h_\partial}^{k_\partial} := \{ \lambda \in L^2(\Gamma) : {\lambda}_{\vert_e} \in \widetilde P_{k_\partial-1}(e), \, \, e \in \mathcal{T}_{h_\partial}^\Gamma \} \subset H^{-\nicefrac{1}{2}}(\Gamma),
$$
whose Lagrangian basis functions are denoted by $\widehat \phi_{j}$.

Further, we denote by $\widehat{\mathbb{V}}, \widehat{\mathbb{K}}$ and $\widehat{\mathbb{Q}}$ the matrices associated with the choice of the space $\widehat X_{h_\partial}^{k_\partial}$, which differ from ${\mathbb{V}}, {\mathbb{K}}$ and ${\mathbb{Q}}$ by the presence of the functions $\widehat \phi_i$ instead of $\phi_i$. 
In the forthcoming Remark \ref{rk:detail} we detail the quadrature adopted to efficiently compute $\widehat{\mathbb{V}}$.
Here we describe how to retrieve the matrices $\mathbb{V}, \mathbb{K}$ and $\mathbb{Q}$ from the corresponding $\widehat{\mathbb{V}}, \widehat{\mathbb{K}}$ and $\widehat{\mathbb{Q}}$. To this aim it is sufficient to define the functions $\phi_{i}$ as a suitable linear combination of the standard ones $\widehat\phi_{i}$ and, hence, to combine accordingly the rows and/or the columns of $\widehat{\mathbb{V}}, \widehat{\mathbb{K}}$ and $ \widehat{\mathbb{Q}}$.
Such a combination depends on the order $k_\partial$, on the shape of the artificial boundary  $\Gamma$ and on the associated mesh $\mathcal{T}_{h_\partial}^{\Gamma}$. In particular, for $k_\partial = 2$, we define $\phi_{i}:=  c_i\widehat\phi_{i} +\widehat \phi_{i+1}$, where $\widehat \phi_{i}$ and $\widehat \phi_{i+1}$ are two consecutive piece-wise linear nodal basis functions.
For $k_\partial = 3$, we distinguish the following two cases: a) $\phi_{2i}:=  c_{2i}\widehat\phi_{2i} + \widehat \phi_{2i+1}$; b)  $\phi_{2i+1}:=  \widehat\phi_{2i+1} + c_{2i+1}\widehat \phi_{2i+2}$.
 The coefficients $c_{\ell}$ are chosen such that the relation $\int_\Gamma \phi_{\ell} = 0$ is satisfied and are retrieved by applying a $\nu$-point Gauss-Legendre quadrature formula, with $\nu$ chosen such that the integral over $\Gamma$ of the associated $\widehat \varphi$ functions is accurately computed. It is worth noting that $\text{dim}(X_{h_\partial}^{k_\partial}) = \text{dim}(\widehat X_{h_\partial}^{k_\partial}) -1$. 

In Figures \ref{fig:phi1} and \ref{fig:phi2}, we show the basis functions of the spaces $\widehat X_{h_\partial}^{k_\partial}$ and $X_{h_\partial}^{k_\partial}$, with ${k_\partial} = 2,3$ respectively, associated with the  uniformly partitioned parametrization interval $[0,2\pi)$ of the particular choice of a circumference. For this choice, it is immediate to get $c_i = -1$ for ${k_\partial} = 2$,  and  $c_{2i} = c_{2i+1} = -2$ for ${k_\partial} = 3$.
\begin{figure}[h]
  \centering
  \begin{minipage}[b]{0.44\textwidth}
    \includegraphics[width=\textwidth]{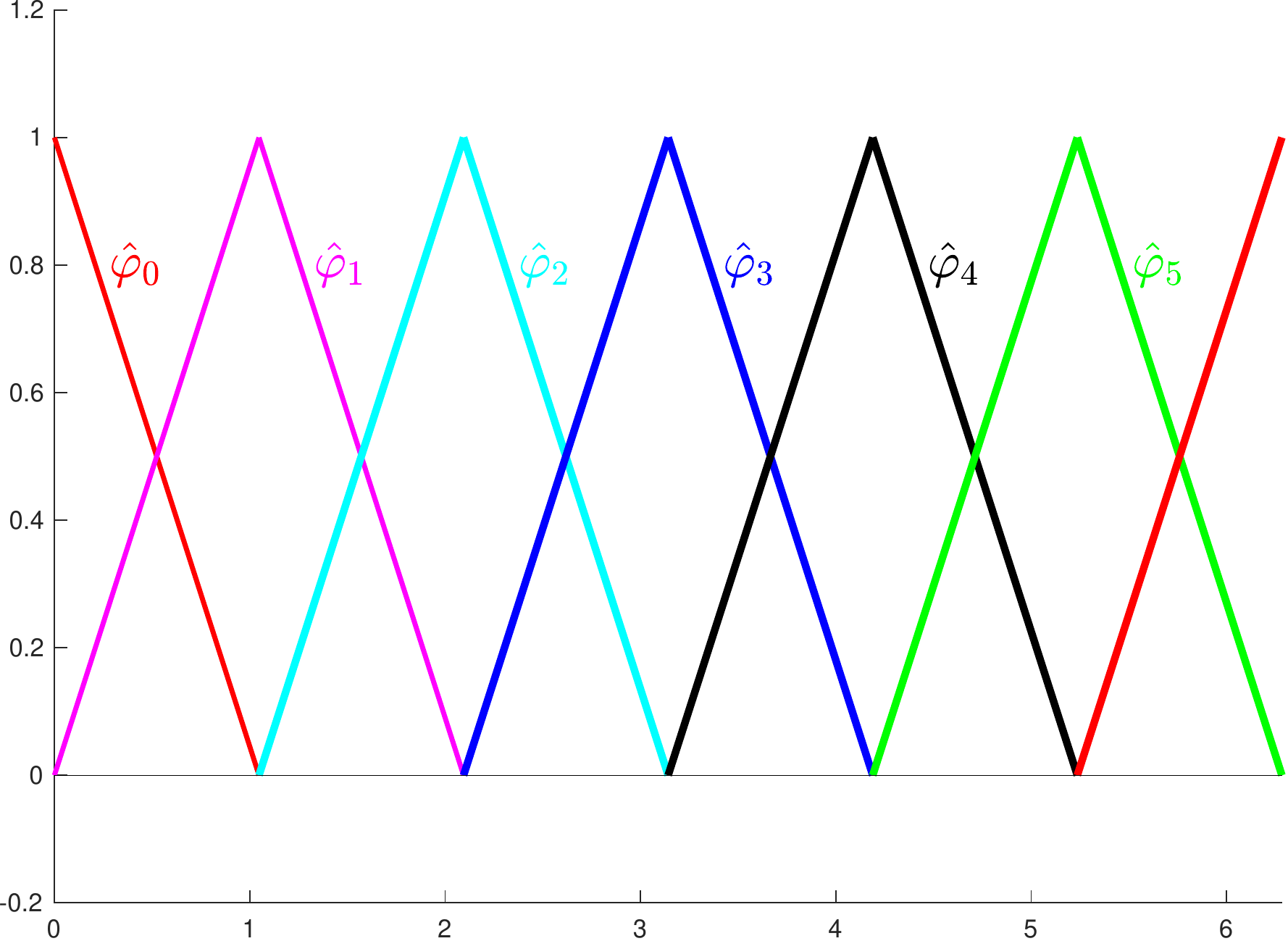}    	
  \end{minipage}
  \hfill
  \begin{minipage}[b]{0.44\textwidth}
    \includegraphics[width=\textwidth]{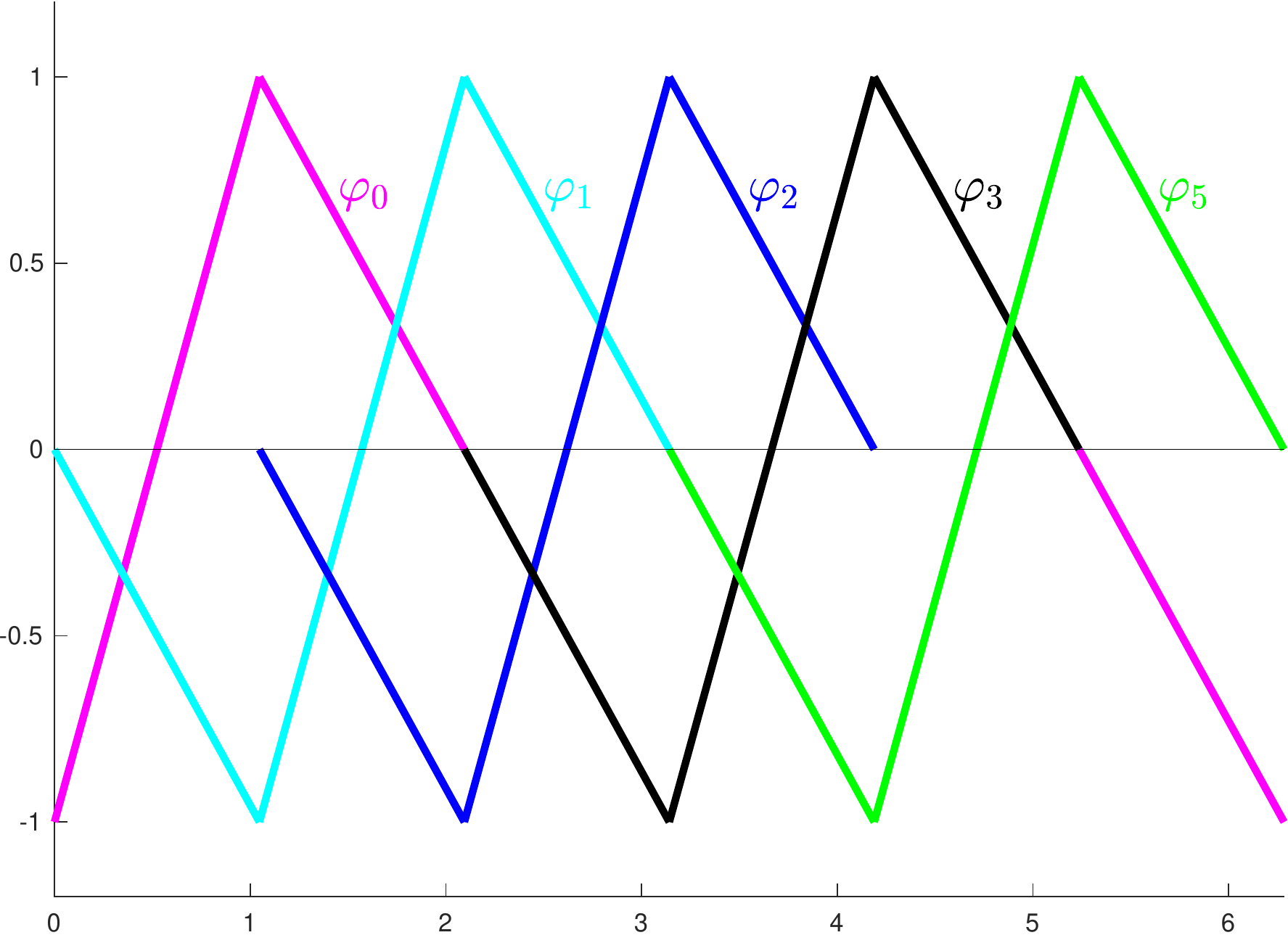}
  \end{minipage}
  	\caption{Basis of $\hat X_{h_\partial}^2$ and $X_{h_\partial}^2$ in $[0,2\pi)$}\label{fig:phi1}
\end{figure}
\begin{figure}[h]
  \centering
  \begin{minipage}[b]{0.48\textwidth}
    \includegraphics[width=\textwidth]{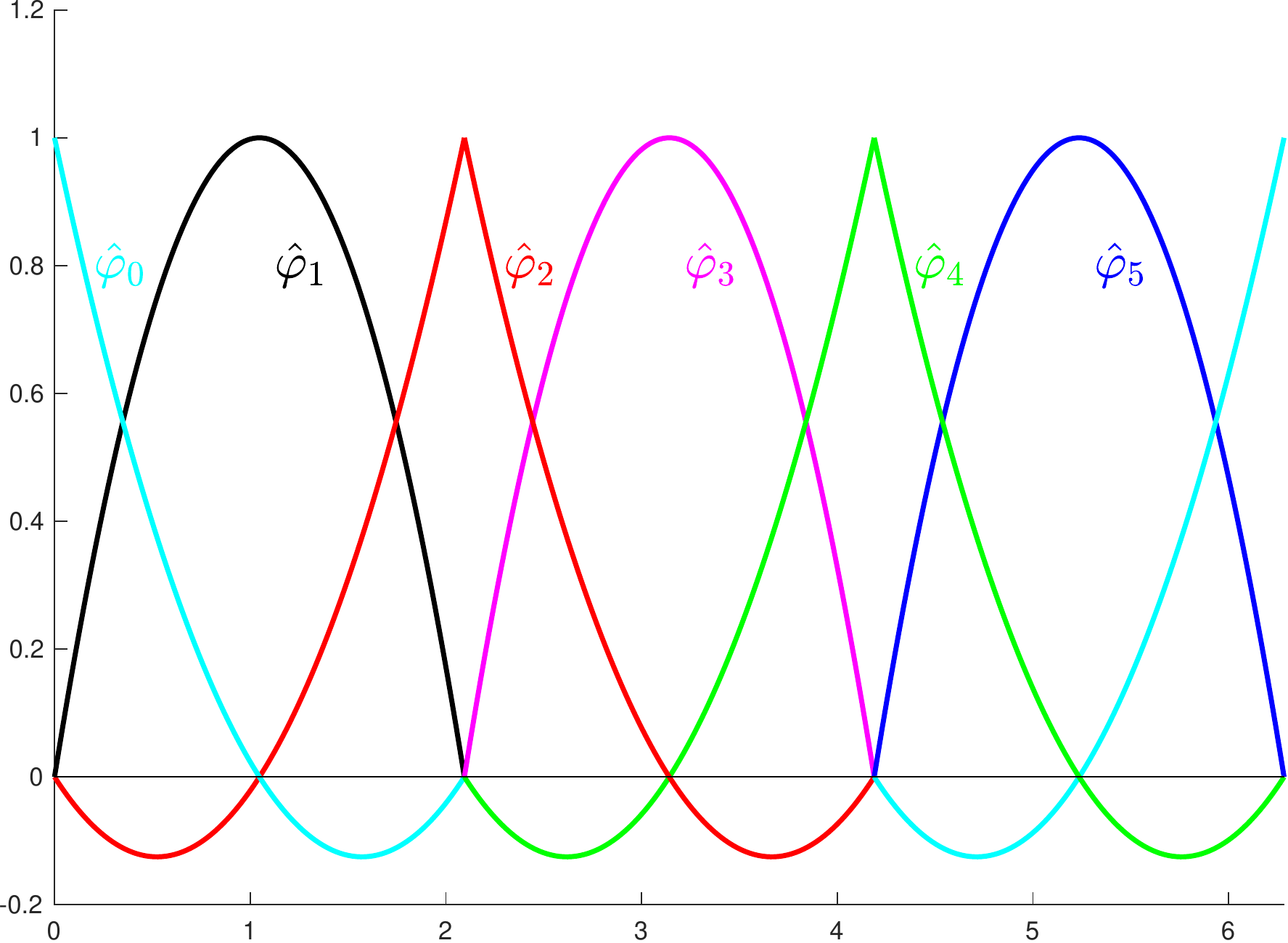}    	
  \end{minipage}
  \hfill
  \begin{minipage}[b]{0.48\textwidth}
    \includegraphics[width=\textwidth]{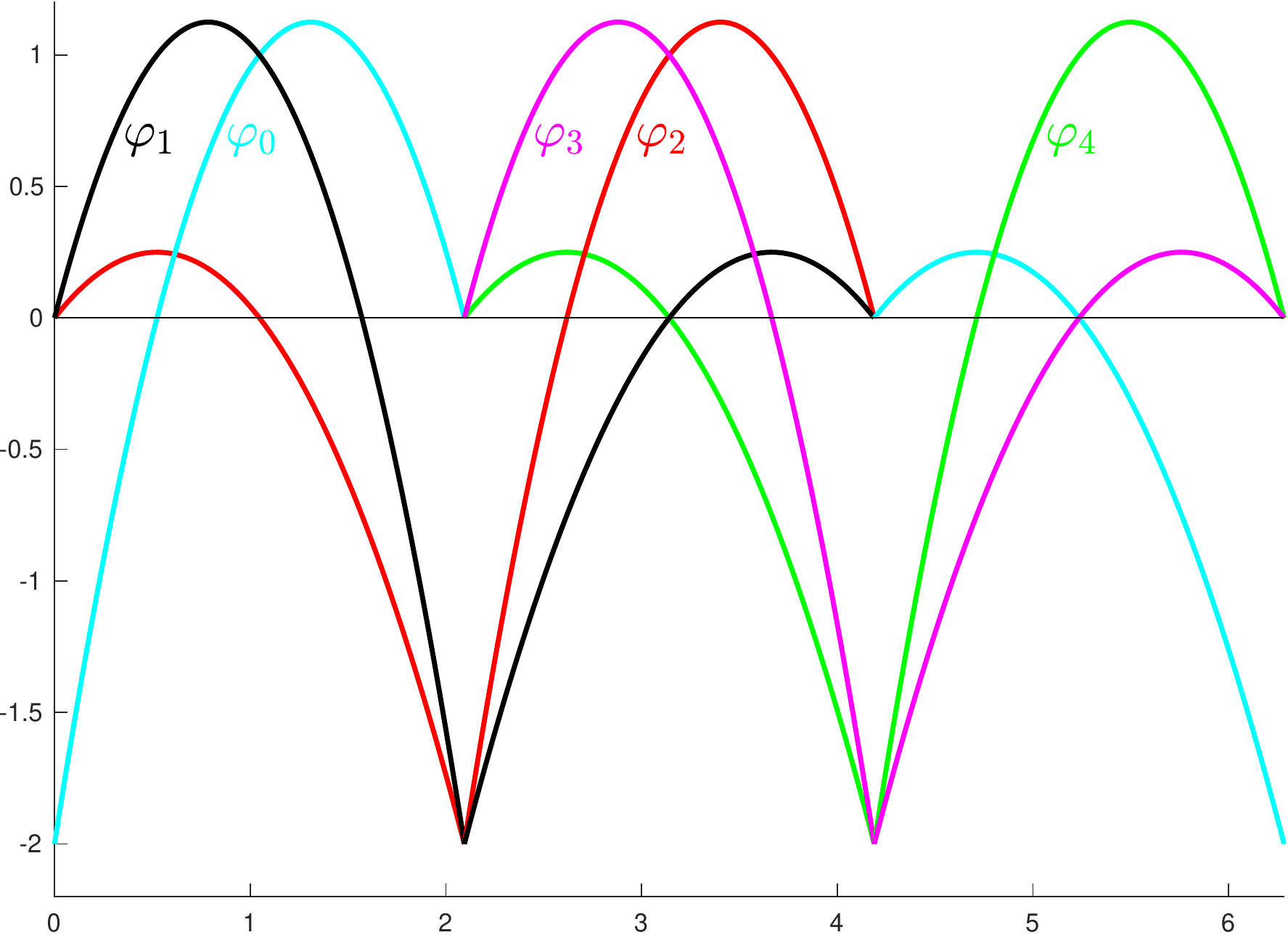}
  \end{minipage}
  	\caption{$\hat X_{h_\partial}^3$ and $X_{h_\partial}^3$ in $[0,2\pi)$}\label{fig:phi2}
\end{figure}
%
\begin{remark}\label{rk:detail}
 We recall that the numerical integration difficulties in the computation of the $\widehat{\mathbb{V}}$ entries spring from the $\log$-singularity of $G(r)$ near the origin, the latter being the kernel of the single layer operator $\text{V}$. To compute the corresponding integrals with high accuracy by few nodes, we have used the very simple and efficient polynomial smoothing technique
proposed in \cite{MonegatoScuderi1999}, referred as the \emph{q-smoothing} technique. It is worth noting that such technique is applied only when the distance $r$ approaches zero. This case  corresponds to the matrix entries belonging to the main diagonal and to the co-diagonals for which the supports of the basis functions overlap or are contiguous. After having introduced the \emph{q-smoothing} transformation, with $q=3$, we have applied the $n$-point Gauss-Legendre quadrature rule with  $n=9$ for the outer integrals, and $n=8$ for the inner ones (see \cite{FallettaMonegatoScuderi2014} and Remark 3 in  \cite{DesiderioFallettaScuderi2021} for further details). For the computation of all the other integrals, we have applied a $9\times 8$-point Gauss-Legendre product quadrature rule. Incidentally, we point out that the integrals involving the kernel function $\partial_{\bf n}{G}$, appearing in the double layer operator $\text{K}$, do not require a particular quadrature strategy, since its singularity $1/r$ is factored out by the same behaviour of $\partial_\mathbf{n} r$ near the origin. Hence, for the computation of the entries of the matrix $\widehat{\mathbb{K}}$, we have directly applied a $9\times 8$-point Gauss-Legendre product quadrature rule.
The quadrature strategy described above guarantees the computation of all the mentioned integrals with a full precision accuracy (16-digit double precision arithmetic) for both $k_\partial=2$ and $k_\partial=3$.
\end{remark}

\section{Numerical results}\label{sec_7_num_results}
In this section, we present some numerical test to validate the theoretical results and to show the effectiveness of the proposed method.

For the generation of the partitioning $\mathcal{T}_{h_\circ}$ of the computational domain $\Omega$, we have used the GMSH software to construct unstructured conforming meshes consisting of quadrilaterals (see \cite{GeuzaineRemacle2009}). If a polygon $E$ has a (straight) edge bordering with the interior boundary or with the artificial one, we transform it into a curved boundary edge by means of a suitable parametrization. We remark that, even if in principle it is possible to fully decouple the interior and boundary meshes, we consider here for simplicity the boundary mesh inherited by the interior one, for which it turns out $h_\partial \leq h_\circ$. 
Furthermore, we point out that in all the numerical test we have considered $k_\partial = 2,3$; larger values than those considered would require a tailored quadrature technique for the accurate computation of the BEM matrices that we have not performed yet. Since this usually is considered the bottleneck of the BEM, the use of decoupled approximation orders allows us to exploit the flexibility of the CVEM to retrieve high accuracy by increasing only the approximation order $k_\circ$. In Example \ref{ex1} we will investigate this aspect.

\subsection{Example 1}\label{ex1}
Let us consider the unbounded region $\Omega_{\text{e}}$, external to the unitary disk
$\Omega_{0}=\{\mathbf{x}=(x_{1},x_{2})^{\top}\in\mathbf{R}^{2} \ : \ x_{1}^{2}+x_{2}^{2}\leq1\}$.
We consider Problem (\ref{dirichlet_problem}) with $f = 0$ and
$g(\mathbf{x}) =  x_1 + 2$
prescribed on the boundary $\Gamma_{0} = \partial \Omega_0$. In this case, the exact solution $u(\mathbf{x})$ is known and its expression is given by 
\begin{equation*}\label{example_sol}
u(\mathbf{x})=\frac{x_1}{x_1^2+x_2^2} + 2, \quad \ \mathbf{x} \in\Omega_{\text{e}}.
\end{equation*}
We choose as artificial boundary $\Gamma$ the circumference of radius 2, 
 so that the finite computational domain $\Omega$ is the annulus bounded internally by $\Gamma_{0}$ and externally by $\Gamma$.  

To develop a convergence analysis, we start by considering a coarse mesh, associated to the level of refinement zero (lev. 0), and all the successive refinements are obtained by halving each side of its elements.
In Table \ref{tab:prima}, we report the total number of the degrees of freedom associated to the CVEM space, corresponding to each decomposition level of the computational domain, and the approximation orders $k_\circ= k_\partial = 2,3$ (see Figure \ref{fig:circ_circ_meshes} for the meshes corresponding to level 0 (left plot) and level 2 (right plot)).%
\begin{figure}[h!]
	\centering
	\includegraphics[width=0.50\textwidth]{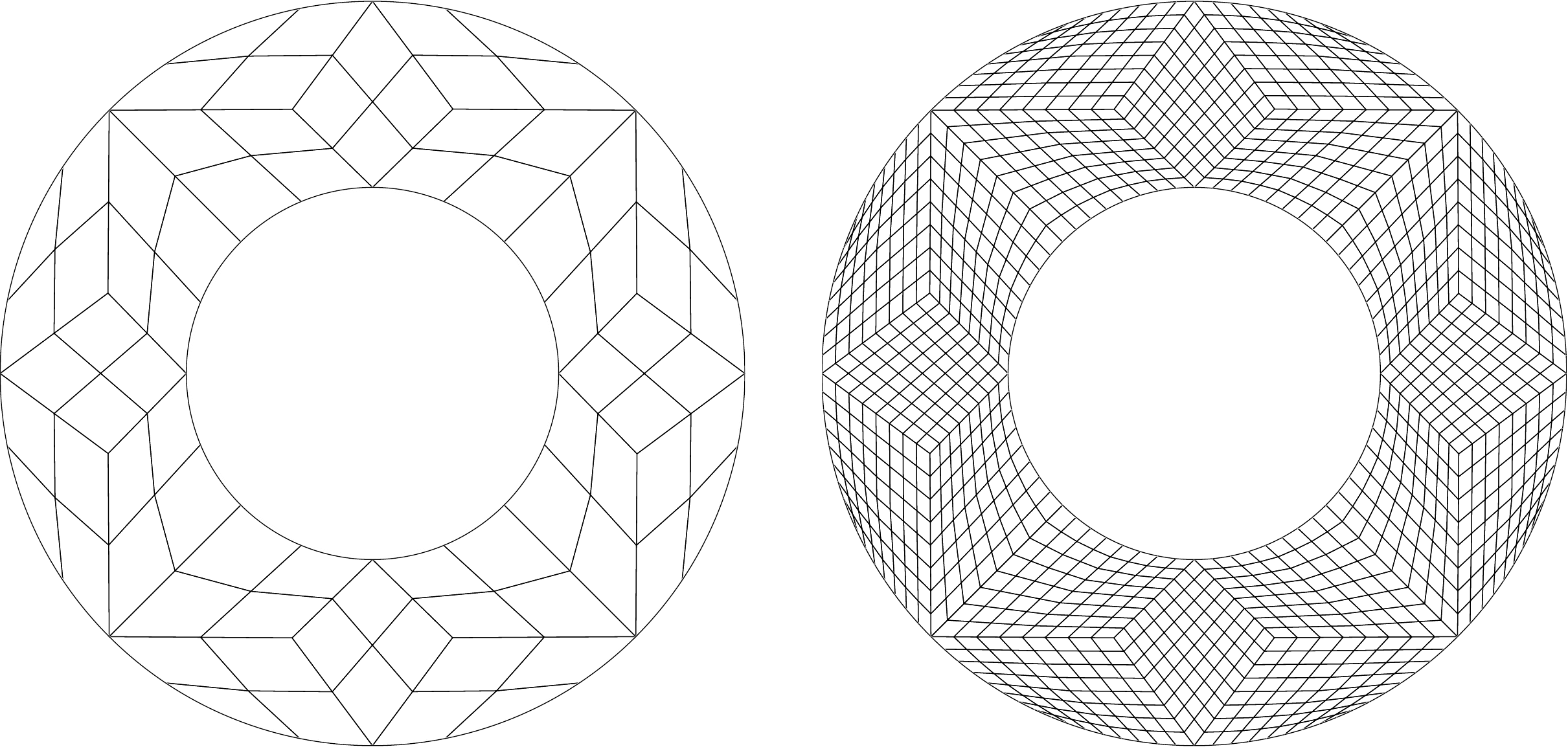}
	\caption{Meshes of $\Omega$ for lev. 0 (left plot) and lev. 2 (right plot).}
	\label{fig:circ_circ_meshes}
\end{figure}

\begin{table}[h!]
	\centering
	\begin{tabular}{lcccccc}
		\toprule
		& lev. 0 & lev. 1 & lev. 2 & lev. 3	& lev. 4 & lev. 5 \\ 
		\toprule
		$k_\circ=2$ & $368$	& $1,376$  & $5,312$ & $20,864$ & 	$82,688$ & $329,216$ \\
		$k_\circ=3$ & $792$	& $3,024$  & $11,808$ & $46,656$ & $185,472$ & $739,584$  \\
		\bottomrule
	\end{tabular}
	\caption{Number of the degrees of freedom associated to the CVEM space.}
	\label{tab:prima}
\end{table}
\indent
To test our numerical approach and to validate the theoretical analysis, the order $k_\circ$ of the approximation spaces is chosen equal to 2 (quadratic) and 3 (cubic), and $k_\partial = k_\circ$. Moreover, recalling that the approximate solution $u_{h_\circ}$ is not known inside the polygons, as suggested in \cite{BeiraoRussoVacca2019} we compute the $H^1$-seminorm and $L^2$-norm relative errors, and the corresponding EOC, by means of the following formulas:
\vspace{0.2cm}
\begin{itemize}
	\item $H^{1}$-seminorm \hspace{0.35cm}  ${\small \varepsilon^{\nabla,k_\circ}_{\text{lev}}:=\sqrt{\frac{\sum\limits_{E\in\mathcal{T}_{h_\circ}}\abs*{u-\Pi_{k_\circ}^{\nabla,E}u_{h_\circ}}^{2}_{H^{1}(E)}}{\abs*{u}^{2}_{H^{1}(\Omega)}}}}$; \vspace{0.2cm}
	\item $L^{2}$-norm \hspace{1.15cm} $\varepsilon^{0,k_\circ}_{\text{lev}}:=\sqrt{\frac{\sum\limits_{E\in\mathcal{T}_{h_\circ}}\left\|u-\Pi_{k_\circ}^{0,E}u_{h_\circ}\right\|^{2}_{L^{2}(E)}}{\left\|u\right\|^{2}_{L^{2}(\Omega)}}}$;
	\vspace{0.1cm}
	\item $\text{EOC}:=\log_{2}\left(\frac{\varepsilon^{*,k_\circ}_{\text{lev}+1}}{\varepsilon^{*,k_\circ}_{\text{lev}}}\right), \quad *  \in \{\nabla,0\}$.
\end{itemize}
In the above formulas the superscript $k_\circ=2,3$ refers to the approximation order of $u$, and the subscript $\text{lev}$ refers to the refinement level. 
For what concerns the evaluation of these errors, to compute the associated integrals over polygons we have used the $n$-point quadrature formulas proposed in \cite{SommarivaVianello2007} and \cite{SommarivaVianello2009}, which are exact for polynomials of degree at most $2n$. For curved polygons, we have applied the generalization of these formulas suggested in \cite{BeiraoRussoVacca2019} (see Section 4.3). In both cases, we have chosen $n=8$.

In Table \ref{tab:convergence_circ_circ_CVEM_BEM_kappa_1} we report $\varepsilon^{\nabla,k_\circ}_{\text{lev}}$ and  $\varepsilon^{0,k_\circ}_{\text{lev}}$ and the corresponding EOC by varying the refinement level from 0 to 5. As we can see the $H^{1}$-seminorm error and the $L^{2}$-norm error estimates confirm the expected convergence order of the method. 
\begin{table}[h!]
	\centering
	\def\sym#1{\ifmmode^{#1}\else\(^{#1}\)\fi}
	\scalebox{0.95}{
		\begin{tabular}{lc||cccc||cccc}
			\toprule
			\multicolumn{2}{c}{} & \multicolumn{4}{c}{$L^{2}$-norm} & \multicolumn{4}{c}{$H^{1}$-seminorm}\\
			\toprule
			lev.	& $h_\circ$ & $\varepsilon^{0,2}_{\text{lev}}$ & $\text{EOC}$ & $\varepsilon^{0,3}_{\text{lev}}$	& $\text{EOC}$ & $\varepsilon^{\nabla,2}_{\text{lev}}$ & $\text{EOC}$ & $\varepsilon^{\nabla,3}_{\text{lev}}$	& $\text{EOC}$\\ 	
			\toprule
			$0$ & $8.02e-01$ & $4.26e-04$ & $$ & $6.74e-05$ & $$ & $4.96e-04$ & $$ & $1.05e-04$ & $$\\
			$$  &  $$  &  $$  &  $2.9$  &  $$  &  $3.9$  &  $$  &  $1.9$  &  $$  &  $2.8$\\
			$1$ & $4.28e-01$ & $5.56e-05$ & $$ & $4.58e-06$ & $$ & $1.36e-04$ & $$ & $1.51e-05$ & $$\\
			$$  &  $$  &  $$  &  $3.0$  &  $$  &  $4.0$  &  $$  &  $2.0$  &  $$  &  $3.0$\\
			$2$ & $2.22e-01$ & $7.05e-06$ & $$ & $2.92e-07$ & $$ & $3.46e-05$ & $$ & $1.95e-06$ & $$\\
			$$  &  $$  &  $$  &  $3.0$  &  $$  &  $4.0$  &  $$  &  $2.0$  &  $$  &  $3.0$\\
			$3$ & $1.13e-01$ & $8.82e-07$ & $$ & $1.84e-08$ & $$ & $8.68e-06$ & $$ & $2.45e-07$ & $$\\
			$$  &  $$  &  $$  &  $3.0$  &  $$  &  $4.0$  &  $$  &  $2.0$  &  $$  &  $3.0$\\
			$4$ & $5.68e-02$ & $1.10e-07$ & $$ & $1.14e-09$ & $$ & $2.17e-06$ & $$ & $3.07e-08$ & $$\\
			$$  &  $$  &  $$  &  $3.0$  &  $$  &  $4.0$  &  $$  &  $2.0$  &  $$  &  $3.0$\\
			$5$ & $2.85e-02$ & $1.38e-08$ & $$ & $7.35e-11$ & $$ & $1.35e-07$ & $$ & $3.93e-09$ & $$\\
			\bottomrule
		\end{tabular}
	}
	\caption{Example 1. Relative errors and EOC.}
	\label{tab:convergence_circ_circ_CVEM_BEM_kappa_1}
\end{table}
For this example, we further investigate the possibility of choosing different approximation orders. In particular, since the meshes we have considered to generate Table \ref{tab:convergence_circ_circ_CVEM_BEM_kappa_1} have the property $h_\circ = 2 h_\partial$, we analyse the convergence of the scheme by fixing $k_\partial$ and varying $k_\circ$.
In Figures  \ref{fig:H1} and \ref{fig:L2} we report the behaviour of the $H^1$-seminorm and $L^2$-norm relative error, respectively.
For each of them we fix in the left plots $k_\partial = 2$ and in the right ones $k_\partial = 3$ and we report the errors associated to the refinement levels 0,1,2 by varying $k_\circ$. As we can see the CVEM convergence order dominates on the BEM one for each $k_\circ \leq 4$, while for larger values the BEM error is no longer negligible. Further, we observe that for $k_\partial = 3$ and $k_\circ = 5$ the CVEM $H^1$-convergence order is preserved, contrarily to that of the $L^2$ one.
It is worth to point out that, for fixed values of $h_\circ$ and $h_\partial$ and for fixed $k_\partial$, the maximum value of $k_\circ$ such that the CVEM error is larger than that of the BEM is related to the dependence of the implicit constants of the error estimates on $k_\circ$ and $k_\partial$. We are aware of a study on such dependency for some interior VEM problems (see for example \cite{BeiraoChernovMascottoRusso2016}, \cite{BeiraoChernovMascottoRusso2018} and \cite{BeiraoManziniMascotto2019}). This is a task by no mean trivial and is worth to be investigated.
Finally, as we can see from Table \ref{tab:prima}, while the increasing behaviour of $k_\circ$ for a fixed mesh is approximately linear, that of lev. for a fixed $k_\circ$ is quadratic. Therefore, it is worth noting that it is more efficient, in terms of computational cost and memory saving, to use high order CVEM rather than to refine the mesh, the latter choice being also computationally demanding for what concerns the efficient computation of the BEM matrices.
\begin{figure}[h]
  \centering
  \begin{minipage}[b]{0.42\textwidth}
    \includegraphics[width=\textwidth]{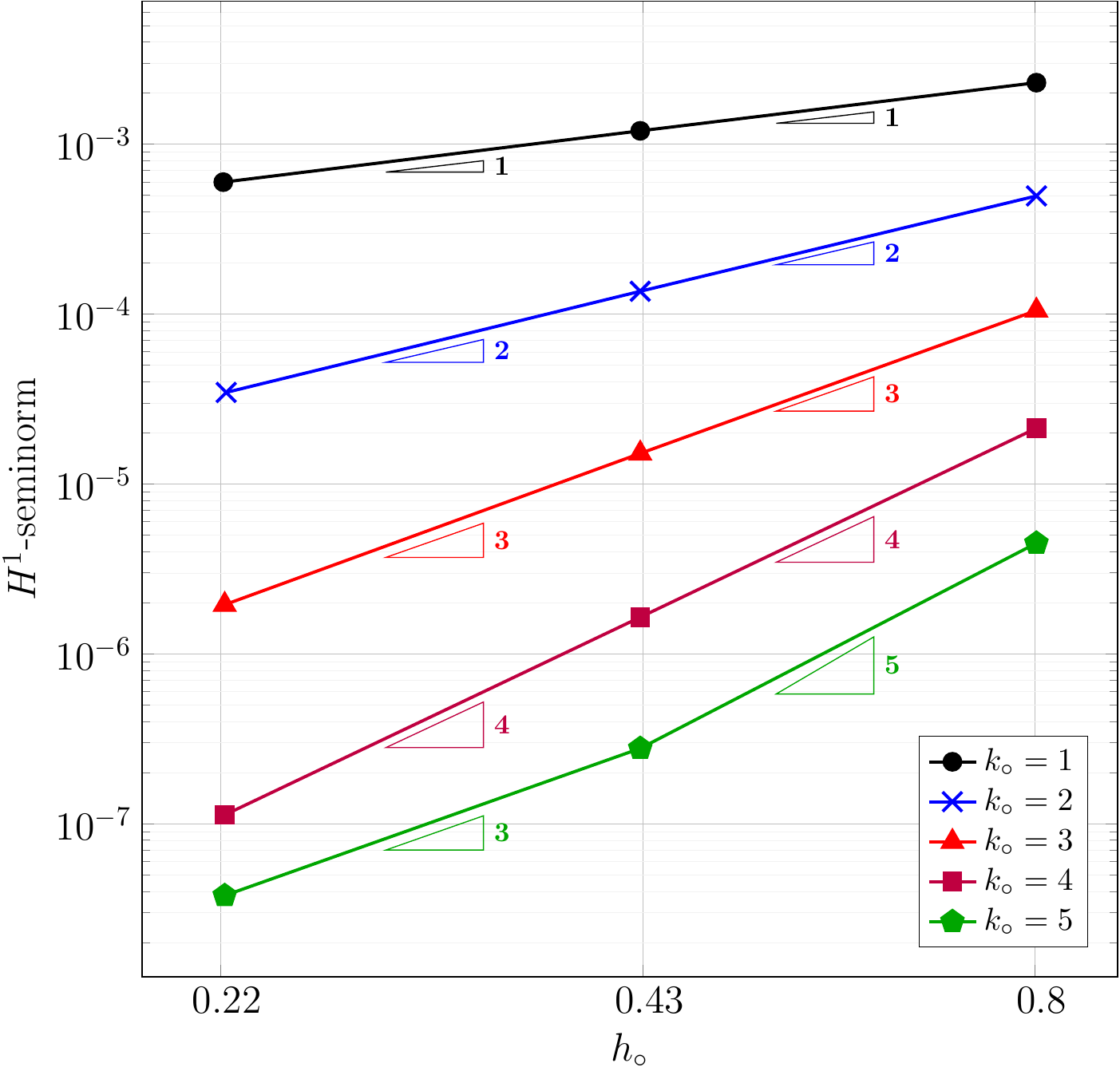}
  \end{minipage}
  \hfill
  \begin{minipage}[b]{0.42\textwidth}
    \includegraphics[width=\textwidth]{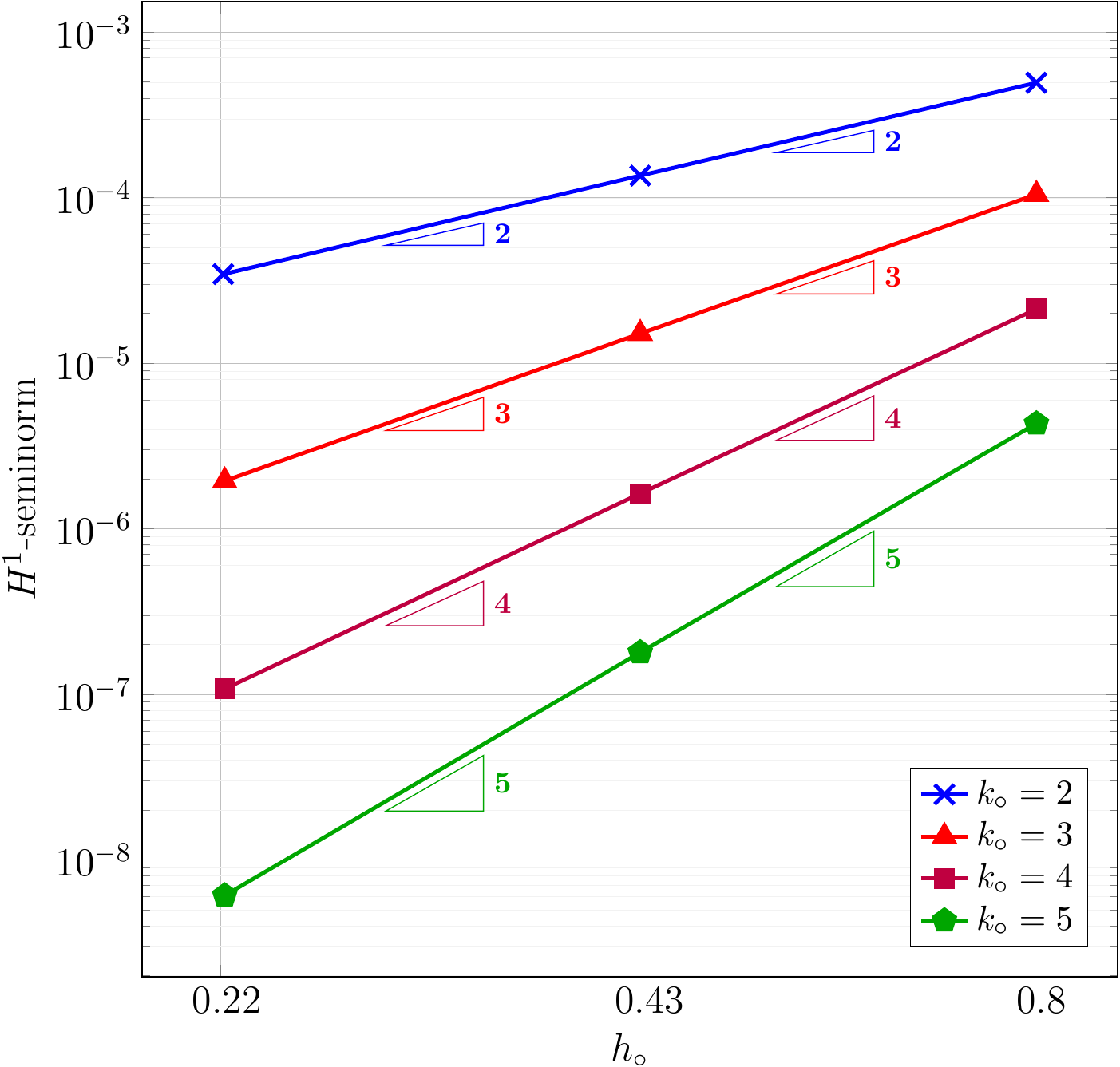}
  \end{minipage}
  \caption{Example 1. Behaviour of the $H^1$-seminorm relative error for $k_\partial = 2$ (left plot) and $k_\partial = 3$ (right plot) by varying $k_\circ$ (lev. 0,1,2).}  \label{fig:H1}
\end{figure}

\begin{figure}[h]
  \centering
  \begin{minipage}[b]{0.42\textwidth}
    \includegraphics[width=\textwidth]{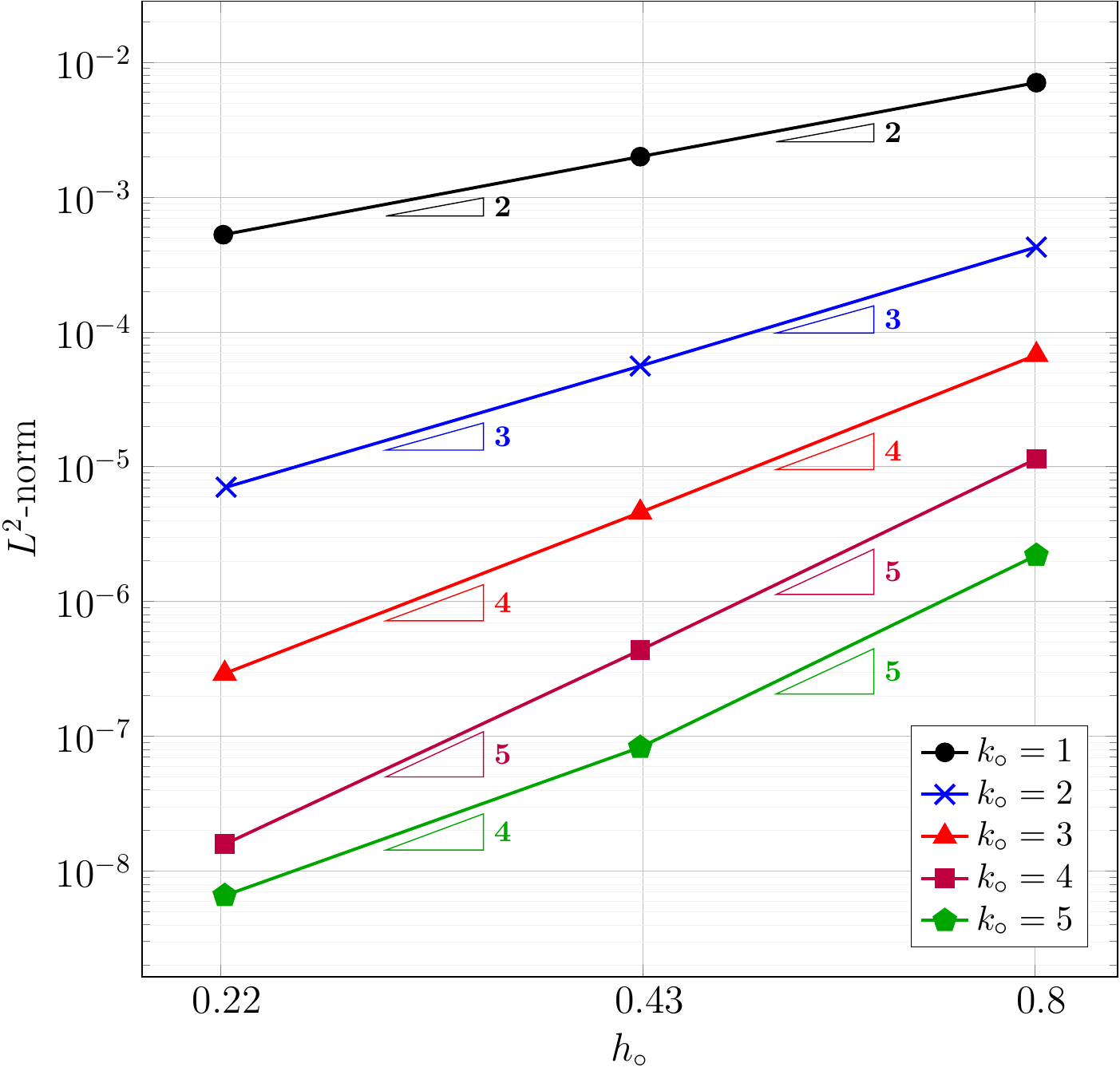}
  \end{minipage}
  \hfill
  \begin{minipage}[b]{0.42\textwidth}
    \includegraphics[width=\textwidth]{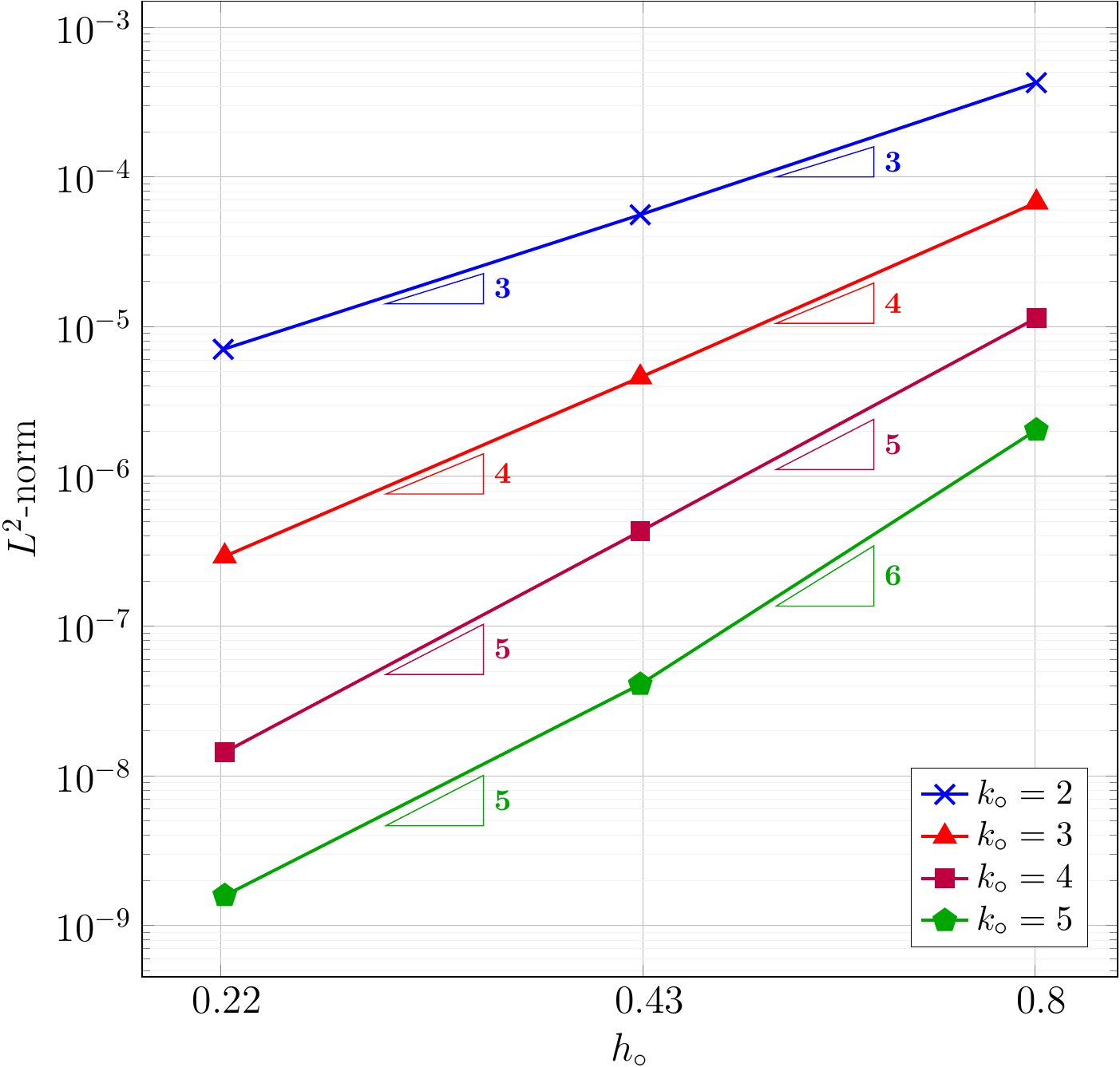}
  \end{minipage}
  \caption{Example 1. Behaviour of the $L^2$-norm relative error for $k_\partial = 2$ (left plot) and $k_\partial = 3$ (right plot) by varying $k_\circ$ (lev. 0,1,2).} \label{fig:L2}
\end{figure}


\subsection{Example 2} \label{example_2}
We consider the example proposed in \cite{LeRoux1977} (and in \cite{BertoluzzaFalletta2019}), for which $\Gamma_0$ is the boundary of the unit disk, centered at the origin of the cartesian axis, $f=0$ and the datum $g$ on $\Gamma_0$ is defined as
\begin{equation*}
g(\mathbf{x}) = \begin{cases}
x_1^4 & x_1 \ge 0, \\ 0 & x_1 < 0.
\end{cases}
\end{equation*}
Solving the Dirichlet Laplace problem in polar coordinates, and expanding the solution in terms of the eigenvectors of the associated Sturm Liouville system, the solution in polar coordinates reads
$$
u(\rho,\theta) = \frac{3}{16} + \frac{\rho^{-2}}{4} \cos(2\theta) + \frac{\rho^{-4}}{16} \cos(4\theta) + \frac{48}{\pi} \sum_{\underset{n \text{~odd}} {n=1} }^\infty \frac{(-1)^{\nicefrac{(n-1)}{2}} \rho^{-n}}{n^5-20n^3 + 64n} \cos{(n\theta)},
$$
from which we deduce that the asymptotic behaviour is characterized by the constant $\alpha = \nicefrac{3}{16} = 0.1875$. We choose as artificial boundary the ellipse of semi-axes $50$ and $15$, so that the values of the numerical solution at the points $(-50,0)$ and $(50,0)$ can be considered good approximations of $\alpha$. In Figure \ref{fig:asyntotic} we compare the behaviour of the exact and numerical solutions in the intervals $[-50,-1]$ (left plot) and $[1, 50]$ (right plot) for a fixed mesh of the computational domain and for different choices of the approximation orders. Besides noting a very good agreement of the solutions, we report that the corresponding absolute errors at $(-50,0)$ and $(50,0)$ are approximately equal to $4.0e-04$ for $k_\circ = 1$ and $k_\partial = 2$, $5.0e-05$ for $k_\circ = k_\partial = 2$ and $1.0e-08$ for $k_\circ = k_\partial = 3$. 
\begin{figure}[h]
  \centering
  \begin{minipage}[b]{0.45\textwidth}
    \includegraphics[width=\textwidth]{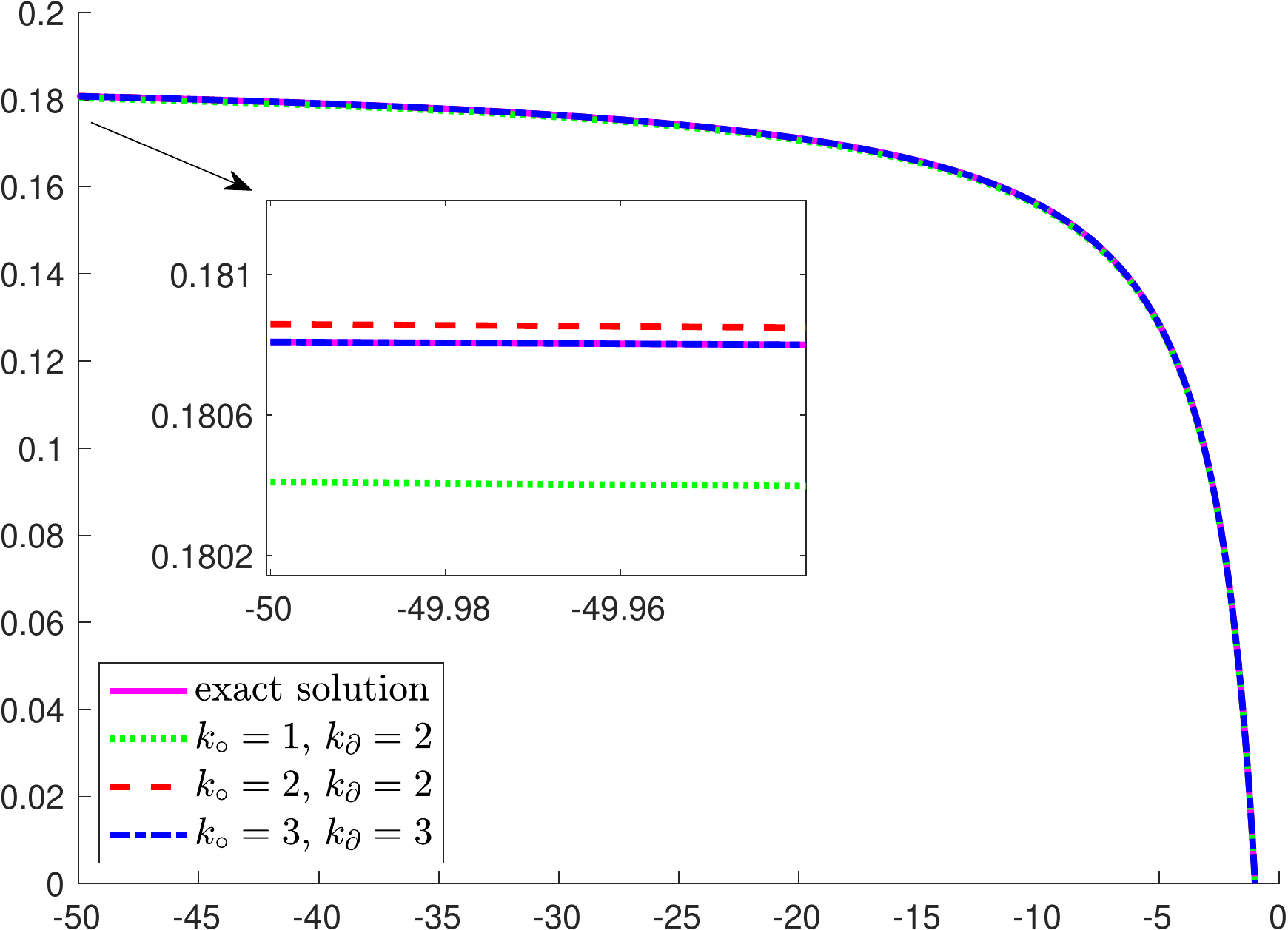}
  \end{minipage}
  \hfill
  \begin{minipage}[b]{0.45\textwidth}
    \includegraphics[width=\textwidth]{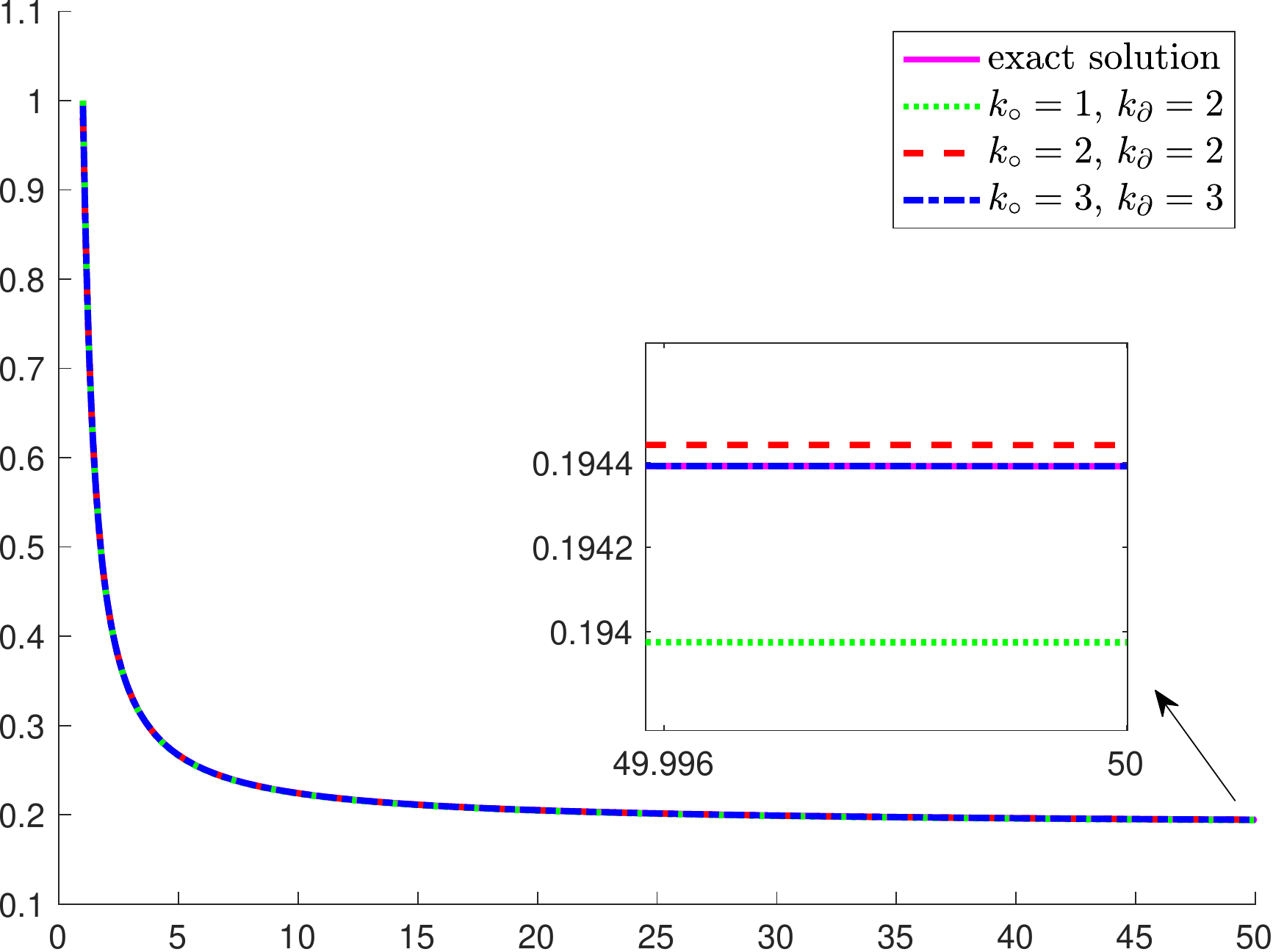}
  \end{minipage}
  \caption{Exact and numerical solutions in $[-50,-1]$ (left plot) and $[1,50]$ (right plot) by varying $k_\circ$ and $k_\partial$.}  \label{fig:asyntotic}
\end{figure}

\section{Conclusions}\label{sec_7_conclusions}
We have proposed and analysed the coupling of a Curved Virtual Element Method with the one-equation Boundary Element Method to solve 2D exterior Poisson problems. The peculiarity of the scheme consists in the use of decoupled approximation orders for the interior CVEM and the boundary integral NRBC. This strategy has allowed us to exploit the well-known flexibility of the CVEM to retrieve an accurate solution by a low order approximation for the BEM.
Since high order BEMs require non-trivial computational efforts to efficiently evaluate the matrix entries of the associated integral operators, the advantage of using a low order BEM turns out to be a key aspect to achieve a good accuracy and convergence rate weighted against computational costs.

The good performances obtained by applying the proposed scheme to elliptic problems, encourage us to consider it within other contexts, such as time dependent exterior problems, for which both the pure BEM and its coupling with standard interior domain methods could become prohibitive.

\section*{Declarations}

This work was performed as part of the GNCS-INdAM 2020 research program \emph{``Metodologie innovative per problemi di propagazione di onde in domini illimitati: aspetti teorici e computazionali''}.

The third author was partially supported by MIUR grant \emph{``Dipartimenti di Eccellenza 2018-2022''}, CUP E11G18000350001.

\bibliographystyle{plain}
\bibliography{bibtex_num}

\end{document}